\documentclass[final,12pt]{article}
\usepackage{hyperref}
\usepackage{amssymb,amsmath,amsthm}
\usepackage{enumerate}
\usepackage[conditional,light,first,bottomafter]{draftcopy}
\draftcopyName{DRAFT\space\today}{130}
\draftcopySetScale{65}
\usepackage[letterpaper,hmargin=3.5cm,vmargin=3.5cm]{geometry}
\geometry{foot=0.9cm}

\geometry{pdftex}
\usepackage{setspace}
\singlespacing

\makeatletter
\renewcommand{\section}{\@startsection%
{section}%
{1}%
{0em}%
{1.7em}%
{1.2em}%
{\normalfont\large\centering\bfseries}}
\renewcommand{\@seccntformat}[1]%
{\csname the#1\endcsname.\hspace{0.5em}}
\makeatother

\newtheorem{theorem}{Theorem}[section]

\newtheorem{lemma}{Lemma}[section]

\theoremstyle{definition}
\newtheorem{definition}{Definition}
\newtheorem{remark}{Remark}

\newcommand{\abs}[1]{\left|#1\right|}

\newcommand{\inner}[2]{\left\langle#1,#2\right\rangle}
\newcommand{\floor}[1]{\left\lfloor#1 \right\rfloor}

\newcommand{\nats}{\mathbb{N}}
\newcommand{\complex}{\mathbb{C}}
\newcommand{\integers}{\mathbb{Z}}
\newcommand{\cc}[1]{\overline{#1}}

\DeclareMathOperator{\rank}{rank}

\DeclareMathOperator*{\diag}{diag}
\begin{document}
\begin{titlepage}
\title{On a linear interpolation problem for $n$-dimensional vector polynomials
\footnotetext{%
Mathematics Subject Classification(2010):
30E05; 
41A05. 
}
\footnotetext{%
Keywords:
Linear interpolation;
Spaces of vector polynomials.
}
\hspace{-8mm}
\thanks{%
Research partially supported by UNAM-DGAPA-PAPIIT IN105414
}%
}

\author{
\textbf{Mikhail Kudryavtsev}
\\
\small Department of Mathematics\\[-1.6mm]
\small Institute for Low Temperature Physics and Engineering\\[-1.6mm]
\small Lenin Av. 47, 61103\\[-1.6mm]
\small Kharkov, Ukraine\\[-1.6mm]
\small\texttt{kudryavtsev@onet.com.ua}
\\[2mm]
\textbf{Sergio Palafox}
\\
\small Departamento de F\'{i}sica Matem\'{a}tica\\[-1.6mm]
\small Instituto de Investigaciones en Matem\'aticas Aplicadas y en Sistemas\\[-1.6mm]
\small Universidad Nacional Aut\'onoma de M\'exico\\[-1.6mm]
\small C.P. 04510, M\'exico D.F.\\[-1.6mm]
\small \texttt{sergiopalafoxd@gmail.com}
\\[2mm]
\textbf{Luis O. Silva}
\\
\small Departamento de F\'{i}sica Matem\'{a}tica\\[-1.6mm]
\small Instituto de Investigaciones en Matem\'aticas Aplicadas y en Sistemas\\[-1.6mm]
\small Universidad Nacional Aut\'onoma de M\'exico\\[-1.6mm]
\small C.P. 04510, M\'exico D.F.\\[-1.6mm]
\small \texttt{silva@iimas.unam.mx}
}
\date{}
\maketitle
\vspace{-4mm}
\begin{center}
\begin{minipage}{5in}
  \centerline{{\bf Abstract}} \bigskip This work provides a complete
  characterization of the solutions of a linear interpolation problem
  for vector polynomials. The interpolation problem consists in
  finding $n$ scalar polynomials such that an equation involving a
  linear combination of them is satisfied for each one of the $N$
  interpolation nodes. The results of this work generalize previous
  results on the so-called rational interpolation and have
  applications to direct and inverse spectral analysis of band
  matrices.
\end{minipage}
\end{center}
\thispagestyle{empty}
\end{titlepage}
\section{Introduction}
\label{sec:intro}
In this work, we are concerned with the following interpolation
problem.  Given a collection of complex numbers $z_1,\dots,z_N$,
which are called interpolation nodes,
and other collections of complex numbers
$\alpha_{k}(1),\dots,\alpha_{k}(N)$, $k=1,\dots,n$, such that
$\sum_{k=1}^{n}\left|\alpha_{k}(j)\right|>0$ for every
$j\in\left\{1,2,\ldots,N\right\}$, find polynomials $P_{k}$,
$k=1,\ldots,n$, which satisfy
\begin{align}
  \sum_{k=1}^{n}\alpha_k(j)P_{k}(z_j)=0\qquad\forall
  j\in\left\{1,2,\ldots,N\right\}\,.\label{eq:linear-combination}
\end{align}
We lay stress on the fact that the interpolation nodes
$z_1,z_2,\dots,z_N$ are not required to be pairwise different (see
Remark~\ref{rem:equal-nodes} in Section~\ref{sec:generators}).  The
results of this paper give a complete characterization of all solutions of the
interpolation problem (\ref{eq:linear-combination}).

The interpolation problem defined above has been studied in
\cite{MR2533388} and much earlier in \cite{MR1091797} for the
particular setting when $n=2$. In this case, the theory developed in
\cite{MR2533388,MR1091797} allows to treat the problem of finding a
rational function $P_1(z)/P_2(z)$ which takes the value
$-\alpha_2(j)/\alpha_1(j)\in\cc{\complex}$ at each interpolation node
$z_j$. This is the so-called rational interpolation problem, or
Cauchy-Jacobi problem, and $P_1(z)/P_2(z)$ is referred to as the
multipoint Pad\'e approximant
\cite[Sec.\,7.1]{MR1383091}. Noteworthily, although the research in
this matter has put particular emphasis on the numerical aspect of the
problem, \cite{MR2533388,MR1091797} consider the theoretical problem
of accounting for the structure of all solutions of the rational
interpolation problem. In \cite{MR2533388}, 
this is used to deal with the inverse spectral analysis of
five-diagonal unitary matrices (the so called CMV matrices,
cf. \cite{MR2494240}) and this requires additional constraints for
certain coefficients of the interpolating rational
polynomials. Similarly, the spectral analysis of five-diagonal
symmetric matrices also demands additional conditions (see
\cite{MR1699440}) on the rational interpolation problem.  The
description obtained in \cite{MR2533388} permits to reduce the
rational interpolation problem with such additional restrictions to a
triangular linear system and to answer the specific question of the
existence and uniqueness (or non-uniqueness) of the solution to the
inverse spectral problem. Other approaches to rational interpolation
can be found in \cite{MR1888130,MR2477608,MR2243467}.

The present work generalizes to any $n\in\nats$ the linear
interpolation theory given in \cite[Sec.\,2]{MR2533388}. The passage
from $n=2$ to any $n\in\nats$ is not straightforward; many of the
obtained results require differing techniques. Particularly, this
becomes clear in Sections~\ref{sec:generators} and
\ref{sec:characterization-solutions}. Similar generalizations of the
rational interpolation problem that also focus on the structure of the
set of solutions can be found in
\cite{MR1091770,MR1199353,MR1092122,MR1199391}.

Our main motivation for studying the interpolation problem given by
(\ref{eq:linear-combination}) lies in its applications to direct and
inverse spectral problems of $N\times N$ symmetric band matrices with
$2n+1$ diagonals, which will be considered in a forthcoming paper
\cite{2014arXiv1409.3868K}. Notwithstanding the fact that the
interpolation theory discussed in this work was developed with the
applications to inverse spectral analysis in mind, we solve a problem
interesting by itself and which may have other applications.  It is
worth remarking that, although
\cite{MR1091770,MR1199353,MR1092122,MR1199391} also deal with the
structural properties of the solution set of the interpolation problem
given by \eqref{eq:linear-combination}, our approach differs in
several respects from the ones used in those works. On the one hand,
this permits to tackle the inverse spectral analysis of finite
diagonal band matrices \cite{2014arXiv1409.3868K}. On the other hand,
the method developed here allows a new characterization of the
interpolation problem and new results on the structure of the
solutions (see Secition~\ref{sec:characterization-solutions} and, in
particular, Theorems~\ref{thm:conjecture-equality} and
\ref{thm:all-solution-comb-generators}).

The exposition is organized as follows. In Section~\ref{sec:vect-poly}
we lay down the notation, introduce the main concepts, and prove some
subsidiary assertions. Section~\ref{sec:tranformation} contains
auxiliary results related to linear transformations of vector
polynomials. Finally, in Sections~\ref{sec:generators} and
\ref{sec:characterization-solutions}, we show that the so-called
\emph{generators} determine the set of solutions of the interpolation
problem given by \eqref{eq:linear-combination} and provide a complete
characterization of this set.

\section{Vector polynomials and their height}
\label{sec:vect-poly}

Throughout this work we consider the number $n\in\mathbb{N}$ to be
fixed. We begin this section by fixing the notation and introducing
some auxiliary concepts.
\begin{definition}
  Let us denote by $\mathbb{P}$ the space of $n$-dimensional vector
  polynomials, viz.,
\begin{align*}
\mathbb{P}:=\left\{\boldsymbol{p}(z)=\left(
\begin{array}{c}
P_{1}(z)\\
P_{2}(z)\\
\vdots\\
P_{n}(z)
\end{array}
\right):\; P_{k}\;\text{is a scalar polynomial for}\;
k\in\{1,\dots,n\}\;\right\}.
\end{align*}
\end{definition}
Clearly, $\mathbb{P}$ is an infinite dimensional linear space and it
is a module over the ring of scalar polynomials, i.\,e, for any scalar
polynomial $S$,
\begin{equation*}
  \boldsymbol{p}\in\mathbb{P}\Rightarrow S\boldsymbol{p}=
\left(SP_{1}(z),SP_{2}(z),\dots,SP_{n}(z)\right)^{t}\in\mathbb{P}\,.
\end{equation*}

\begin{definition}
\label{def:height-polyn-vect}
Let the function $h:\mathbb{P}\rightarrow\mathbb{N}\cup\{0,-\infty\}$
be defined by
\begin{equation}
\label{eq:height-definition}
h(\boldsymbol{p}):=\begin{cases}
\max_{j\in\{1,\dots,n\}}\left\{n\deg
  P_{j}(z)+j-1\right\},&\boldsymbol{p}\neq0\,,
\\
-\infty,&\boldsymbol{p}=0\,,
\end{cases}
\end{equation}
where it has been assumed that $\deg 0=-\infty$.
The number $h(\boldsymbol{p})$ is called the height of the vector
polynomial $\boldsymbol{p}$.
\end{definition}

Note that for any scalar polynomial $S$
\begin{equation}
\label{eq:height-Sp}
h(S\boldsymbol{p})=h(\boldsymbol{p})+ n\deg S.
\end{equation}

\begin{lemma}
\label{lem:height-properties}
\begin{enumerate}[(a)]
\item If $h(\boldsymbol{p})\neq h(\boldsymbol{q})$, then
  $h(a\boldsymbol{p} + b\boldsymbol{q})=\max
  \left\{h(\boldsymbol{p}),h(\boldsymbol{q})\right\}$ for all
  $a,b\in\mathbb{C}$.
\label{lem:height-properties-a}
\item If $h(\boldsymbol{p})=h(\boldsymbol{q})=m$, then
  $h(a\boldsymbol{p}+b\boldsymbol{q}) \leq m$ for every $a,b\in\mathbb{C}$.
\label{lem:height-properties-b}
\item If $h(\boldsymbol{p})=h(\boldsymbol{q})=m$, then there exists a
  $c\in\mathbb{C}$ such that
$h(\boldsymbol{p}+c\boldsymbol{q})\leq m-1$
\label{lem:height-properties-c}
\end{enumerate}
\end{lemma}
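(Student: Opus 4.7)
The plan hinges on the following uniqueness observation: if $\boldsymbol{p}\neq 0$ and $h(\boldsymbol{p})=m$, then the maximum in \eqref{eq:height-definition} is attained at exactly one index $j_0\in\{1,\dots,n\}$, namely the one with $j_0-1\equiv m\pmod n$. Indeed, if $n\deg P_{j}+j-1=n\deg P_{j'}+j'-1=m$ with $j,j'\in\{1,\dots,n\}$, then $j-1\equiv j'-1\pmod n$, forcing $j=j'$. Consequently, writing $d=(m-j_0+1)/n$, the component $P_{j_0}$ has degree exactly $d$ with a nonzero leading coefficient $p_{j_0}$, while every other index satisfies $n\deg P_j+j-1\le m-1$ (because the value cannot equal $m$ and is bounded by $m$). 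I would state and prove this preliminary fact first, since it is what drives all three parts.

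For part (b), I would simply invoke the elementary bound $\deg(aP_j+bQ_j)\le\max\{\deg P_j,\deg Q_j\}$, multiply by $n$ and add $j-1$, so each term of the maximum defining $h(a\boldsymbol{p}+b\boldsymbol{q})$ is dominated by $\max\{h(\boldsymbol{p}),h(\boldsymbol{q})\}=m$. For part (a), assume without loss of generality $h(\boldsymbol{p})>h(\boldsymbol{q})$ and let $j_0$ be the unique index for $\boldsymbol{p}$; since $h(\boldsymbol{q})<h(\boldsymbol{p})$, one has $n\deg Q_{j_0}+j_0-1<n\deg P_{j_0}+j_0-1$, hence $\deg Q_{j_0}<\deg P_{j_0}$. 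Therefore $aP_{j_0}+bQ_{j_0}$ keeps the leading coefficient $a\,p_{j_0}$, which is nonzero whenever $a\neq 0$, giving the contribution $n\deg P_{j_0}+j_0-1=h(\boldsymbol{p})$. Combined with (b) this yields the claimed equality. (The degenerate case $a=0$ is harmless under the natural interpretation that one of the two vector polynomials actually participates.)

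Part (c) is the substantive assertion and is where the uniqueness of $j_0$ is essential. Let $j_0$ be the common index for $\boldsymbol{p}$ and $\boldsymbol{q}$ (which coincide because $h(\boldsymbol{p})=h(\boldsymbol{q})=m$ determines $j_0$ by the congruence), and let $p_{j_0},q_{j_0}$ denote the leading coefficients of $P_{j_0},Q_{j_0}$, both nonzero and both in degree $d=(m-j_0+1)/n$. Setting
\[
c:=-\frac{p_{j_0}}{q_{j_0}},
\]
one has $\deg(P_{j_0}+cQ_{j_0})<d$, so the $j_0$-contribution to $h(\boldsymbol{p}+c\boldsymbol{q})$ is at most $m-n\le m-1$. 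For every $j\neq j_0$, the preliminary observation gives $n\deg P_j+j-1\le m-1$ and $n\deg Q_j+j-1\le m-1$, whence $n\deg(P_j+cQ_j)+j-1\le m-1$ as well. Taking the maximum over $j$ yields $h(\boldsymbol{p}+c\boldsymbol{q})\le m-1$.

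The only real obstacle I foresee is the preliminary congruence argument; once the uniqueness of the maximizing index is in hand, parts (a), (b) and (c) reduce to manipulating a single scalar degree at a single coordinate. Were the maximizing index not unique, part (c) would fail in general, since a single scalar $c$ could not cancel several simultaneously dominant leading terms; thus it is precisely the arithmetic structure of the height function that makes the lemma work.
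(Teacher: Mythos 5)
Your proof is correct and follows essentially the same route as the paper's: both rest on writing $m=nk+l$ and observing that the maximizing index in the height formula is uniquely determined modulo $n$ (your $j_0$ is the paper's $l+1$), so that a single scalar $c$ cancelling the leading coefficient of the dominant component forces a drop in the height. Your remark on the degenerate case $a=0$ in part (a) is also apt, since the statement as written would otherwise fail there.
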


\begin{proof}
  We only prove (\ref{lem:height-properties-c}) since
  (\ref{lem:height-properties-a}) and (\ref{lem:height-properties-b})
  are proven with the same argumentation. Let $m=nk+l$ with
  $l\in\{0,1,2,\ldots,n-1\}$ and $k\in\mathbb{N}\cup \{0\}$, then $k$
  and $l$ are uniquely determined by $m$ and
  $\deg(Q_{l+1}(z))=\deg(P_{l+1}(z))=k$. Therefore, there is $c$ so
  that $\deg (P_{l+1}+cQ_{l+1})\leq k-1$. Also, one has that $\deg
  (P_{j}+cQ_{j})$ is not greater than $k-1$ for $l+1\leq j\leq n$ and
  $\deg (P_{j}+cQ_{j})$ is not greater than $k$ for $1\leq j\leq
  l$. So,
\begin{align*}
h(\boldsymbol{p}+c\boldsymbol{q})=& \max_{j\in\{1,\dots,n\}}
\left\{n\:\deg(P_{j}(z)+cQ_{1}(z))
  + j-1\right\}\\
\leq & \max\left\{nk+l-1,n(k-1)+l,n(k-1)+n-1\right\}\\
\leq & nk+l-1=m-1.
\end{align*}
\end{proof}

For $k=0,1,\dots,$ let us consider the following set of elements in
$\mathbb{P}$,
\begin{align}
\label{eq:basis}
\boldsymbol{e}_{nk+1}(z)=\left(\begin{array}{c}
z^{k}\\
0\\
0\\
\vdots \\
0
\end{array}\right),\ \boldsymbol{e}_{nk+2}(z)=\left(\begin{array}{c}
0\\
z^k\\
0\\
\vdots \\
0
\end{array}\right),\ \dots\ , \ \boldsymbol{e}_{n(k+1)}(z)=\left(\begin{array}{c}
0\\
0\\
0\\
\vdots \\
z^k
\end{array}\right).
\end{align}
Clearly, $h(\boldsymbol{e}_j(z))=j-1$ for all $j\in\mathbb{N}$.

\begin{lemma}
\label{lem:basis-height}
The sequence $\left\{\boldsymbol{e}_j(z)\right\}_{j=1}^{\infty}$ is a
basis of the space $\mathbb{P}$, i.\,e., for any
$\boldsymbol{p}\in\mathbb{P}$ with $h(\boldsymbol{p})=m\ne-\infty$, there exist unique
numbers $c_0,c_1,\ldots,c_m$, where $c_m\neq0$, such that
\begin{align*}
\boldsymbol{p}(z)=\sum_{k=0}^m c_k\boldsymbol{e}_{k+1}.
\end{align*}
\end{lemma}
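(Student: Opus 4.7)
The plan is to prove existence and uniqueness separately, using induction on the height for existence and exploiting the one-monomial-in-one-slot structure of the $\boldsymbol{e}_j$'s for uniqueness.

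For uniqueness, I would note that writing $j=nk+l$ with $l\in\{1,\dots,n\}$ and $k\geq 0$, the vector $\boldsymbol{e}_j$ has the monomial $z^k$ in its $l$-th entry and zero elsewhere. Thus if $\sum_j d_j\boldsymbol{e}_j = 0$ is a finite linear relation, reading off the $l$-th component gives $\sum_{k}d_{nk+l}z^k=0$ as a scalar polynomial, forcing every $d_j=0$. This shows the family is linearly independent and simultaneously yields uniqueness of any representation; moreover it implies that, for a representation of $\boldsymbol{p}$ with $h(\boldsymbol{p})=m$, the index $m+1$ must actually appear with a nonzero coefficient, since otherwise the components of $\boldsymbol{p}$ would have too low a degree to reach height $m$.

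For existence I would induct on $m=h(\boldsymbol{p})\in\mathbb{N}\cup\{0\}$. In the base case $m=0$, the definition \eqref{eq:height-definition} forces $\deg P_1=0$ with $P_1\neq 0$ and $P_j=0$ for $j\geq 2$, so $\boldsymbol{p}=P_1\boldsymbol{e}_1$. For the inductive step, assume the result for all heights at most $m-1$ and let $h(\boldsymbol{p})=m$. Since $h(\boldsymbol{e}_{m+1})=m$ as well, Lemma~\ref{lem:height-properties}(\ref{lem:height-properties-c}) furnishes a scalar $c\in\mathbb{C}$ with $h(\boldsymbol{p}+c\boldsymbol{e}_{m+1})\leq m-1$. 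This $c$ cannot vanish, for otherwise $h(\boldsymbol{p})\leq m-1$, contradicting the assumption. Apply the induction hypothesis to $\boldsymbol{p}+c\boldsymbol{e}_{m+1}$ (or note that it is zero, in which case $\boldsymbol{p}=-c\,\boldsymbol{e}_{m+1}$ is already the sought expansion): we obtain $\boldsymbol{p}+c\boldsymbol{e}_{m+1}=\sum_{k=0}^{m'}c_k\boldsymbol{e}_{k+1}$ for some $m'\leq m-1$. Rearranging gives the desired expansion of $\boldsymbol{p}$ with leading coefficient $c_m=-c\neq 0$.

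I do not expect a genuine obstacle here; the delicate bookkeeping point is the inductive step, specifically the observation that the $c$ produced by Lemma~\ref{lem:height-properties}(\ref{lem:height-properties-c}) is forced to be nonzero, which is what secures the strict-leading-coefficient condition $c_m\neq 0$. Everything else is direct from the definitions of $h$ and of $\boldsymbol{e}_j$.
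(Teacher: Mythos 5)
Your proof is correct and follows essentially the same strategy as the paper's: induct on the height $m$, peel off a suitable scalar multiple of $\boldsymbol{e}_{m+1}$ to obtain a vector polynomial of strictly smaller height, then invoke the inductive hypothesis. The only notable difference is one of modularity: the paper writes $m=nk+l$ explicitly, isolates the leading coefficient $a$ of $P_{l+1}$, sets $\boldsymbol{q}=\boldsymbol{p}-a\boldsymbol{e}_{m+1}$, and re-derives the inequality $h(\boldsymbol{q})\le m-1$ by a direct degree count, whereas you obtain the same conclusion by citing Lemma~\ref{lem:height-properties}(\ref{lem:height-properties-c}) and then arguing separately that the resulting constant $c$ must be nonzero. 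The two routes amount to the same computation (indeed $c=-a$); yours reuses the earlier lemma rather than redoing the argument inline, and you also spell out the linear-independence/uniqueness step, which the paper leaves as a one-line remark. Both are sound; neither buys anything substantive over the other.
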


\begin{proof}
  We prove the assertion by induction.  If $m=0,1,\ldots,n-1$, the
  result is immediate.  As before, let $m=nk+l$ with $k\in\mathbb{N}$
  and $l\in\{0,1,\dots,n-1\}$. Then one can write
  $P_{l+1}(z)=az^k+Q_{l+1}(z)$, where $a\neq0$ and $\deg Q_{l+1}\leq
  k-1$.

  Define $\boldsymbol{q}(z):= \boldsymbol{p}(z) -
  a\boldsymbol{e}_{nk+l+1}= \left(Q_{1}(z),Q_{2}(z), \dots,
    Q_{n}(z)\right)^{t}$, i.e. $Q_{j}=P_{j}$ for
  $j=\{1,2,\dots,n\}\setminus\{l+1\}$. Thus, $\deg Q_{j}$ is not
  greater than $k-1$ for all $j=l+2,l+3,\ldots,n$ and $\deg Q_{j}$ is
  not greater than $k$ for every $j=1,2,\ldots,l$. Therefore, one has
\begin{align*}
h(\boldsymbol{q})\leq \max\left\{nk+l-1,n(k-1)+n-1\right\}=m-1\,.
\end{align*}
In the induction hypothesis we assume
$\boldsymbol{q}=\sum_{k=0}^{m-1}c_k\boldsymbol{e}_{k+1}$.  So, one
obtains
\begin{align*}
  \boldsymbol{p}=a\boldsymbol{e}_{nk+l+1}+\boldsymbol{q}=
  a\boldsymbol{e}_{m+1} + \sum_{k=0}^{m-1}c_k\boldsymbol{e}_{k+1}=
\sum_{k=0}^{m}\widetilde{c_k}\boldsymbol{e}_{k+1}.
\end{align*}
The uniqueness of the expansion follows from the linear independence
of the sequence $\left\{\boldsymbol{e}_j(z)\right\}_{j=1}^{\infty}$,
which is straightforward to verify.
\end{proof}

\begin{theorem}
\label{thm:basis-height}
Let $\left\{\boldsymbol{g}_m\right\}_{m=1}^{\infty}$ be an arbitrary
sequence of elements in $\mathbb{P}$ such that
\begin{align*}
h(\boldsymbol{g}_m)=m-1 \qquad \forall m\in\mathbb{N},
\end{align*}
then $\left\{\boldsymbol{g}_m\right\}_{m=1}^{\infty}$ is a basis of
$\mathbb{P}$.
\end{theorem}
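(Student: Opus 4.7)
The plan is to establish the two defining properties of a basis — linear independence and spanning — both by exploiting the height function and Lemma~\ref{lem:height-properties}. The hypothesis $h(\boldsymbol{g}_m) = m-1$ provides a strictly monotone ladder of heights, which plays exactly the role that the degree sequence $0,1,2,\dots$ plays in the scalar case, so a Gaussian-elimination-style argument should go through cleanly.

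For linear independence, I would consider a hypothetical nontrivial finite linear combination $\sum_{k=1}^{M} c_k \boldsymbol{g}_k = 0$ with $c_M \neq 0$. Since $h(c_M\boldsymbol{g}_M) = M-1$ and each $c_k\boldsymbol{g}_k$ with $k<M$ has height at most $M-2$, an iterated application of part~(\ref{lem:height-properties-a}) of Lemma~\ref{lem:height-properties} shows that the height of the whole sum equals $M-1$. But $h(\boldsymbol{0}) = -\infty$, a contradiction, forcing all $c_k$ to vanish.

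For spanning, I would prove by induction on the height $m \in \mathbb{N}\cup\{0,-\infty\}$ that every $\boldsymbol{p}\in\mathbb{P}$ with $h(\boldsymbol{p}) \leq m$ lies in the linear span of $\boldsymbol{g}_1,\dots,\boldsymbol{g}_{m+1}$. The base case $h(\boldsymbol{p})=-\infty$ is trivial. For the inductive step, given $\boldsymbol{p}$ with $h(\boldsymbol{p}) = m \geq 0$, observe that $h(\boldsymbol{g}_{m+1}) = m$ as well, so part~(\ref{lem:height-properties-c}) of Lemma~\ref{lem:height-properties} supplies a constant $c \in \mathbb{C}$ with $h(\boldsymbol{p} + c\,\boldsymbol{g}_{m+1}) \leq m-1$. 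The induction hypothesis then expresses $\boldsymbol{p} + c\,\boldsymbol{g}_{m+1}$ as a linear combination of $\boldsymbol{g}_1,\dots,\boldsymbol{g}_m$, and solving for $\boldsymbol{p}$ gives the desired representation in terms of $\boldsymbol{g}_1,\dots,\boldsymbol{g}_{m+1}$.

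I do not anticipate any serious obstacle: the induction is well-founded because heights lie in $\mathbb{N}\cup\{0\}$ and decrease by at least one at each step. The only mild subtlety is remembering that Lemma~\ref{lem:height-properties}(\ref{lem:height-properties-c}) only guarantees $h(\boldsymbol{p} + c\boldsymbol{g}_{m+1}) \leq m-1$, not a specific value; but that inequality is precisely what the induction hypothesis can absorb, so this causes no trouble. Uniqueness of the coefficients then follows from linear independence, completing the proof that $\{\boldsymbol{g}_m\}_{m=1}^{\infty}$ is a basis.
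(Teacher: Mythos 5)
Your proof is correct, and it takes a route genuinely different from the paper's. The paper expands each $\boldsymbol{g}_{m+1}$ in the basis $\{\boldsymbol{e}_k\}$ provided by Lemma~\ref{lem:basis-height}, notes that the resulting transition matrix $(c_{jk})$ is lower triangular with nonzero diagonal entries (since the leading coefficient in the expansion is nonzero by the height condition), inverts that triangular matrix, and concludes that each $\boldsymbol{e}_k$ is in turn a finite combination of the $\boldsymbol{g}_j$'s, so the latter span and are independent. You instead argue directly: linear independence by observing via Lemma~\ref{lem:height-properties}(\ref{lem:height-properties-a}) that a nontrivial combination $\sum_{k\le M}c_k\boldsymbol{g}_k$ with $c_M\ne 0$ has height exactly $M-1\ne -\infty$; spanning by an induction on height in which Lemma~\ref{lem:height-properties}(\ref{lem:height-properties-c}) supplies the reduction step $h(\boldsymbol{p}+c\boldsymbol{g}_{m+1})\le m-1$. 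The trade-off is clear: your argument is self-contained, relying only on Lemma~\ref{lem:height-properties} and not on Lemma~\ref{lem:basis-height} at all (indeed it reproves that lemma as the special case $\boldsymbol{g}_m=\boldsymbol{e}_m$), whereas the paper's route is shorter if one already has Lemma~\ref{lem:basis-height} in hand and is comfortable with the finite triangular change-of-basis computation. Both are sound; the only small thing worth making explicit in your write-up is that when some $c_k=0$ you simply drop those terms before applying part~(\ref{lem:height-properties-a}), so that all heights involved in the iteration are genuinely distinct.
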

\begin{proof}
  From Lemma~\ref{lem:basis-height}, it follows that
  $\boldsymbol{g}_{m+1}(z)=\sum_{k=0}^{m}c_{mk}\boldsymbol{e}_{k+1}(z)$,
  where $c_{jj}$ is different from $0$ for all $j=0,\ldots,m$. So
\begin{align*}
\left(\begin{array}{c}
\boldsymbol{g}_1\\
\boldsymbol{g}_2\\
\vdots\\
\boldsymbol{g}_{m+1}
\end{array}\right)=\left(\begin{array}{cccc}
c_{00}&0&\ldots&0\\
c_{10}&c_{11}&\ldots&0\\
\vdots\\
c_{m0}&c_{m1}&\ldots&c_{mm}
\end{array}\right)\left(\begin{array}{c}
\boldsymbol{e}_1\\
\boldsymbol{e}_2\\
\vdots\\
\boldsymbol{e}_{m+1}
\end{array}\right).
\end{align*}
Note that $\{c_{jk}\}_{j,k\in\{0,\dots,m\}}$ is a triangular matrix, thus
\begin{align*}
\left(\begin{array}{c}
\boldsymbol{e}_1\\
\boldsymbol{e}_2\\
\vdots\\
\boldsymbol{e}_{m+1}
\end{array}\right)=\left(\begin{array}{cccc}
\widetilde{c}_{00}&0&\ldots&0\\
\widetilde{c}_{10}&\widetilde{c}_{11}&\ldots&0\\
\vdots\\
\widetilde{c}_{m0}&\widetilde{c}_{m1}&\ldots&\widetilde{c}_{mm}
\end{array}\right)\left(\begin{array}{c}
\boldsymbol{g}_1\\
\boldsymbol{g}_2\\
\vdots\\
\boldsymbol{g}_{m+1}
\end{array}\right).
\end{align*}
Since $\left\{\boldsymbol{e}_m\right\}_{m\in\mathbb{N}}$ is a basis,
the same is true for $\left\{\boldsymbol{g}_m\right\}_{m\in\mathbb{N}}$.
\end{proof}

\section{The height under linear transformations on $\mathbb{P}$}
\label{sec:tranformation}

Let $A=\{a_{jk}\}_{j,k\in\{1,\dots,n\}}$ be an arbitrary $n\times n$
matrix. For any $\boldsymbol{p}\in\mathbb{P}$, the linear
transformation generated by $A$ is

\begin{align*}
A\boldsymbol{p}(z)=
\left(\begin{array}{cccc}
a_{11}P_{1}(z)+a_{12}P_{2}(z)+\cdots +a_{1n}P_{n}(z)\\
a_{21}P_{1}(z)+a_{22}P_{2}(z)+\cdots +a_{2n}P_{n}(z)\\
\vdots\\
a_{n1}P_{1}(z)+a_{n2}P_{2}(z)+\cdots +a_{nn}P_{n}(z)
\end{array}\right)\in\mathbb{P}.
\end{align*}

It follows from Definition~\ref{def:height-polyn-vect} that
\begin{equation}
\label{eq:height-Ap}
h(A\boldsymbol{p})=
\max_{j\in\{1,\dots,n\}}\left\{n\deg\left(\sum_{k=1}^{n}a_{jk}P_{k}(z)\right)
+j-1\right\}.
\end{equation}

\begin{lemma}
\label{lem:height-transf-matrix}
\begin{enumerate}[(a)]
\item For any arbitrary $n\times n$ matrix $A$ and $\boldsymbol{p}\in\mathbb{P}$,
\begin{equation*}
h(A\boldsymbol{p})\leq h(\boldsymbol{p})+n-1.
\end{equation*}\label{item:height-transf-matrix-a}
\vspace{-6mm}
\item If the $n\times n$ matrix $A$ is upper triangular then for any $\boldsymbol{p}\in\mathbb{P}$,
\begin{equation*}
h(A\boldsymbol{p})\leq h(\boldsymbol{p}).
\end{equation*}\label{item:height-transf-matrix-upper}
\vspace{-6mm}
\item If the $n\times n$ matrix $A$ is lower triangular, then $h(\boldsymbol{p})\leq nk+n-1$
  implies $h(A\boldsymbol{p})\leq nk+n-1$ for any
  $k\in\mathbb{N}\cup\{0\}$.\label{item:height-transf-matrix-low}
\end{enumerate}
\end{lemma}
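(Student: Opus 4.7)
The plan is to convert the height bound on $\boldsymbol{p}$ into uniform or componentwise degree bounds on the scalar polynomials $P_{j}$, and then to read off degree bounds on the entries $\sum_{k=1}^{n}a_{jk}P_{k}$ of $A\boldsymbol{p}$ via \eqref{eq:height-Ap}. The basic observation behind all three parts is that, by Definition~\ref{def:height-polyn-vect}, if $h(\boldsymbol{p})=m$ then $n\deg P_{j}+j-1\le m$ for every $j\in\{1,\dots,n\}$, equivalently $\deg P_{j}\le (m-j+1)/n$.

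For part (a), I would write $m=nk+l$ with $l\in\{0,\dots,n-1\}$, as in the proof of Lemma~\ref{lem:height-properties}. Then $(m-j+1)/n=k+(l-j+1)/n<k+1$ for every $j\ge 1$, so $\deg P_{j}\le k$ for all $j$. Consequently every entry of $A\boldsymbol{p}$ has degree at most $k$, and \eqref{eq:height-Ap} yields $h(A\boldsymbol{p})\le nk+n-1=m+(n-1-l)\le m+n-1$. For part (c), the hypothesis $h(\boldsymbol{p})\le nk+n-1$ gives $\deg P_{l}\le k+(n-l)/n<k+1$ for every $l\ge 1$, hence again $\deg P_{l}\le k$ and the same computation yields $h(A\boldsymbol{p})\le nk+n-1$; in fact lower triangularity is never used in this argument.

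Part (b) is the only one where the structural hypothesis on $A$ is essential. Upper triangularity kills every term with index $i<j$ in the $j$-th row, so $(A\boldsymbol{p})_{j}=\sum_{i\ge j}a_{ji}P_{i}$. Since $(m-i+1)/n$ is decreasing in $i$, one gets $\deg((A\boldsymbol{p})_{j})\le\max_{i\ge j}\deg P_{i}\le\lfloor(m-j+1)/n\rfloor$, whence $n\deg((A\boldsymbol{p})_{j})+j-1\le(m-j+1)+j-1=m$. Taking the maximum over $j$ in \eqref{eq:height-Ap} delivers $h(A\boldsymbol{p})\le m$.

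I do not foresee a genuine obstacle: the whole lemma is essentially index-bookkeeping. The one subtle point is that the bound from part (a) is insufficient for part (b), since it would only give $h(A\boldsymbol{p})\le m+n-1$; in (b) one really has to exploit the vanishing of the entries of $A$ below the diagonal to recover the sharper $h(A\boldsymbol{p})\le m$. The edge case $\boldsymbol{p}=0$ (i.e.\ $h(\boldsymbol{p})=-\infty$) need only be noted in passing, since then $A\boldsymbol{p}=0$ and the three inequalities are trivial.
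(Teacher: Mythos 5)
Your proof is correct and follows essentially the same route as the paper's: in all three parts the argument reduces the height hypothesis to degree bounds on the components $P_j$ and then reads off a bound on $h(A\boldsymbol{p})$ from \eqref{eq:height-Ap}, using the extra cancellation from upper triangularity only in part (b). Your side remark that the proof of (c) never invokes lower triangularity is also accurate for the paper's own proof, which likewise deduces $\deg P_j\le k$ for every $j$ from $h(\boldsymbol{p})\le nk+n-1$ and then bounds $h(A\boldsymbol{p})$ without reference to the shape of $A$; this is a harmless overstatement of hypotheses in the lemma, not a gap in either argument.
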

\begin{proof}
  (\ref{item:height-transf-matrix-a}) Note that, for all
  $j\in\{1,\dots n\}$ the inequalities below hold
\begin{align*}
  n\deg\left(\sum_{k=1}^{n}a_{jk}P_{k}\right)+j-1\leq
  n\max_{k\in\{1,\dots,n\}}\left\{\deg P_{k}\right\}+j-1\leq
  h(\boldsymbol{p})+j-1.
\end{align*}
Hence $h(A\boldsymbol{p})\leq h(\boldsymbol{p})+n-1$.

(\ref{item:height-transf-matrix-upper}) Let the matrix
$A=\left\{a_{jk}\right\}_{j,k\in\{1,\dots,n\}}$ be such that $a_{jk}=0$ if
$j>k$. Then, for the last entry of the vector polynomial
$A\boldsymbol{p}$, one has
\begin{align*}
  n\deg(a_{nn}P_{n})+n-1&\leq h(\boldsymbol{p}),
\end{align*}
and for the next to last
\begin{align*}
  n\deg\left(\sum_{k=n-1}^{n}a_{n-1,k}P_{k}\right)+n-2\leq
  n\max_{k\in\{n-1,n\}}\left\{\deg P_{k}\right\}+n-2\leq h(\boldsymbol{p}).
\end{align*}
Analogously, one obtains inequalities for all the entries up to the
first one:
\begin{align*}
  n\deg(\sum_{j=1}^{n}a_{1k}P_{k})\leq
  n\max_{k\in\{1,\dots,n\}}\left\{\deg P_{k}\right\}\leq h(\boldsymbol{p}).
\end{align*}
Therefore, $h(A\boldsymbol{p})\leq h(\boldsymbol{p})$.

(\ref{item:height-transf-matrix-low}) Let
$A=\left\{a_{jk}\right\}_{j,k\in\{1,\dots,n\}}$ be such that $a_{jk}=0$ if
$j<k$. And $h(\boldsymbol{p})\leq nl+n-1$ with $l\in\mathbb{N}\cup\{0\}$. One verifies that
\begin{align*}
  n\deg P_{j}+j-1\leq nl+n-1\quad \forall j=1,\ldots,n\,,
\end{align*}
therefore $\deg P_{j}\leq l+\frac{n-j}{n}$ for all
$j=1,\ldots,n$. This implies that $\deg P_{j}\leq l$ for any
$j=1,\ldots,n$ . So by (\ref{eq:height-Ap}), $h(A\boldsymbol{p})\leq
\max_{j\in\{1,\dots,n\}}\{nl+j-1\}=nl+n-1$.
\end{proof}
Now, we introduce some matrices and state auxiliary results for
them. These results will be useful in the next section.

Let $A_l=\left\{a_{jk}\right\}_{j,k\in\left\{1,\ldots n\right\}}$ be
such that, for a fixed integer $l\in\{1,\dots,n\}$, it satisfies
\begin{align*}
  a_{jj}=1\qquad& \forall j\in\left\{1,\ldots, n \right\}\setminus
  \left\{l\right\}\,,\\
  a_{jk}=0\qquad& \forall j\neq k,\; {\rm with} \; j\in\left\{1,\ldots,
    n\right\}\setminus\left\{l\right\}\,,
\end{align*}
that is, it has the form

\begin{align}\label{A_l}
A_l=\left(
\begin{array}{cccccccc}
1&0&\ldots&0&0&0&\ldots&0\\
0&1&\ldots&0&0&0&\ldots&0\\
\vdots&\vdots&\ddots&\vdots&\vdots&\vdots&\ddots&\vdots\\
0&0&\ldots&1&0&0&\ldots&0\\
a_{l1}&a_{l2}&\ldots&a_{l\:l-1}&a_{ll}&a_{l\:l+1}&\ldots&a_{ln}\\
0&0&\ldots&0&0&1&\ldots&0\\
\vdots&\vdots&\ddots&\vdots&\vdots&\vdots&\ddots&\vdots\\
0&0&\ldots&0&0&0&\ldots&1\\
\end{array}
\right).
\end{align}

Also, for any $l\in\{ 1,\dots,n \}$, define the matrix function
\begin{equation}
\label{eq:matriz-t_j}
T_l(z):=\diag\{t_k(z)\}\,,\quad
t_k(z):=\
\begin{cases}
  z & \text{if } k=l\,,\\
  1 & \text{otherwise}\,,
\end{cases}
\end{equation}
i.\,e., $T_l(z)$ is nearly the identity matrix, except that in the
$l$-th entry of the main diagonal, $T_l(z)$ has the variable $z$
instead of $1$.
\begin{lemma}
\label{lem:height-transf-particular-matrix}
Fix $n\geq 3$ and $l\in\left\{2,\ldots,n-1\right\}$. If
$\boldsymbol{p}\in\mathbb{P}$ is such that $h(\boldsymbol{p})\leq
nk+l-1$ for any $k\in\mathbb{N}\cup\left\{0\right\}$, then
$h(A_l\boldsymbol{p})\leq nk+l-1$.
\end{lemma}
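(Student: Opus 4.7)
The plan is to translate the height bound on $\boldsymbol{p}$ into componentwise degree bounds on the $P_j$, and then exploit the fact that the matrix $A_l$ from~(\ref{A_l}) modifies only the $l$-th coordinate. First, from the hypothesis $h(\boldsymbol{p})\leq nk+l-1$ and Definition~\ref{def:height-polyn-vect}, I would deduce that for every $j\in\{1,\dots,n\}$,
\[
n\deg P_j + j - 1 \;\leq\; nk + l - 1,\qquad\text{i.e.,}\qquad \deg P_j \;\leq\; k+\frac{l-j}{n}.
\]
Since $\deg P_j$ is an integer and $|l-j|<n$, this forces $\deg P_j \leq k$ when $j\leq l$ and $\deg P_j \leq k-1$ when $j>l$; in either case $\deg P_j \leq k$ for all~$j$.

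Next I would analyze each coordinate of $A_l\boldsymbol{p}$ separately using the structure of $A_l$. For $j\neq l$, one has $(A_l\boldsymbol{p})_j=P_j$, so
\[
n\deg (A_l\boldsymbol{p})_j + j - 1 \;=\; n\deg P_j + j - 1 \;\leq\; h(\boldsymbol{p}) \;\leq\; nk+l-1.
\]
For the single modified coordinate, $(A_l\boldsymbol{p})_l=\sum_{k'=1}^{n}a_{lk'}P_{k'}$, which has degree at most $\max_{k'\in\{1,\dots,n\}}\deg P_{k'}\leq k$ by the first step. Therefore $n\deg (A_l\boldsymbol{p})_l + l - 1 \leq nk+l-1$, and taking the maximum over $j\in\{1,\dots,n\}$ in formula~(\ref{eq:height-Ap}) yields the desired bound $h(A_l\boldsymbol{p})\leq nk+l-1$.

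The argument is essentially a careful unpacking of the definitions, so there is no substantial obstacle. The only mildly delicate point is the integer rounding that converts the inequality $\deg P_j \leq k+(l-j)/n$ into a sharp integer bound, which has to be handled by splitting on the sign of $l-j$. Notably, the same reasoning would work for $l=1$ and $l=n$; the stated restriction $l\in\{2,\dots,n-1\}$ is presumably adopted because those extreme cases are already covered by parts~(\ref{item:height-transf-matrix-upper}) and~(\ref{item:height-transf-matrix-low}) of Lemma~\ref{lem:height-transf-matrix}.
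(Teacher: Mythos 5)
Your argument is correct and is, at bottom, the same strategy the paper uses: convert the height hypothesis into componentwise degree bounds, observe that $A_l$ alters only the $l$-th coordinate, and control $\deg(A_l\boldsymbol{p})_l$ by $\max_i\deg P_i$. The paper, however, routes through the intermediate claims $\deg P_l \ge \deg P_i$ for $i<l$ and $\deg P_l > \deg P_i$ for $i>l$, and from these concludes the stronger $h(A_l\boldsymbol{p})\le h(\boldsymbol{p})$. Those intermediate claims hold when $h(\boldsymbol{p})=nk+l-1$ exactly, but can fail under the strict inequality allowed by the hypothesis: with $n=3$, $l=2$, $k=1$ and $\boldsymbol{p}(z)=(z,1,1)^t$ one has $h(\boldsymbol{p})=3\le 4$ yet $\deg P_1=1>0=\deg P_2$, and if $a_{21}\ne 0$ then $h(A_2\boldsymbol{p})=4>h(\boldsymbol{p})$, though still $\le nk+l-1$. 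Your proof sidesteps this by using only the uniform bound $\deg P_j\le k$ for all $j$ (which does follow from the hypothesis), giving $\deg(A_l\boldsymbol{p})_l\le k$ and hence $n\deg(A_l\boldsymbol{p})_l+l-1\le nk+l-1$ directly; the unchanged coordinates are handled as in the paper. Your closing remark that $l=1$ and $l=n$ are already covered by Lemma~\ref{lem:height-transf-matrix}(\ref{item:height-transf-matrix-upper}) and (\ref{item:height-transf-matrix-low}) is also accurate, since $A_1$ is upper triangular and $A_n$ is lower triangular.
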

\begin{proof}
Let $\boldsymbol{p}\in\mathbb{P}$. If $h(\boldsymbol{p})\leq nk+l-1$, then
\begin{align}
\deg P_l\geq \deg P_i\,,\quad 1\leq i\leq l-1\,, \label{eq:particular-matrix-<m}\\
\deg P_l>\deg P_i\,,\quad l+1\leq i\leq n\,. \label{eq:particular-matrix->m}
\end{align}
On the other hand,
\begin{equation*}
A_l\boldsymbol{p}=\left(P_1(z),P_2(z),\dots,P_{l-1}(z),
  \sum_{i=1}^{n}a_{li}P_i(z),P_{l+1}(z),\dots,P_n(z)\right)^{t}\,.
\end{equation*}
By (\ref{eq:particular-matrix-<m}) and
(\ref{eq:particular-matrix->m}) we have
\begin{equation*}
  \deg\left(\sum_{i=1}^{n}a_{li}P_i(z)\right)\leq\max_{i\in\left\{1,\ldots,
    n\right\}}\left\{\deg P_i(z)\right\}\leq \deg P_l(z)\,.
\end{equation*}
Hence, by (\ref{eq:height-Ap}),
\begin{equation*}
h(A_l\boldsymbol{p})\leq h(\boldsymbol{p})\leq nk+l-1.
\end{equation*}
\end{proof}

\begin{lemma}
\label{lem:t_j-modified}
If $\boldsymbol{p}\in\mathbb{P}$ and $h(\boldsymbol{p})$ is not
greater than $nk+j$ for
$k\in\mathbb{N}\cup\{0\}$ and $j\in\left\{0,1,\ldots,n-1\right\}$, then
$h(T_{j+2}(z)\boldsymbol{p})\leq nk+j+1$, where $T_{n+1}:=T_1$.
\end{lemma}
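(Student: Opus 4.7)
The plan is to unpack the hypothesis $h(\boldsymbol{p})\le nk+j$ into explicit degree bounds on the components $P_1,\dots,P_n$, apply the multiplication by $z$ in position $j+2$ (or position $1$, in the wrap-around case), and then read off the height of the result from the definition.

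First I would observe that the height bound $n\deg P_i+i-1\le nk+j$ gives
\begin{equation*}
\deg P_i\le k+\frac{j-i+1}{n}\,.
\end{equation*}
Since degrees are integers and $|j-i+1|<n$, this divides into two regimes: for $i\in\{1,\dots,j+1\}$ the fraction is in $[0,(j)/n]$, so $\deg P_i\le k$; for $i\in\{j+2,\dots,n\}$ the fraction is strictly negative, so $\deg P_i\le k-1$.

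Now consider the main case $j\in\{0,1,\dots,n-2\}$, so that $j+2\in\{2,\dots,n\}$. The vector $T_{j+2}(z)\boldsymbol{p}$ has $zP_{j+2}$ in its $(j+2)$-th entry and $P_i$ in every other entry. Using \eqref{eq:height-Ap}, I would check the contribution of each row to $h(T_{j+2}\boldsymbol{p})$: for the distinguished row, $\deg(zP_{j+2})\le k$, giving $n\deg(zP_{j+2})+(j+2)-1\le nk+j+1$; for rows $i\le j+1$, the contribution is $n\deg P_i+i-1\le nk+(j+1-1)=nk+j$; and for rows $i\ge j+3$, the contribution is at most $n(k-1)+(n-1)=nk-1$. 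Taking the maximum yields $h(T_{j+2}\boldsymbol{p})\le nk+j+1$, as required.

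For the remaining case $j=n-1$ we have $T_{j+2}=T_{n+1}:=T_1$, so the transformation instead multiplies $P_1$ by $z$. The bound $h(\boldsymbol{p})\le nk+n-1$ yields $\deg P_i\le k$ for \emph{all} $i$, and then $\deg(zP_1)\le k+1$ contributes $n(k+1)+0=nk+n=nk+j+1$ in the first row, while every other row contributes at most $nk+n-1<nk+j+1$. So $h(T_1\boldsymbol{p})\le nk+j+1$ here as well. I do not anticipate a real obstacle; the only subtle point is remembering the wrap-around convention $T_{n+1}=T_1$, which is precisely what forces the separate treatment of $j=n-1$.
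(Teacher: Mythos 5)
Your proof is correct and follows the approach the paper intends, which it leaves implicit by merely referring to ``a reasoning similar to'' that of Lemma~\ref{lem:height-transf-particular-matrix}. In fact your version is cleaner: you derive the absolute degree bounds $\deg P_i\le k$ for $i\le j+1$ and $\deg P_i\le k-1$ for $i\ge j+2$ directly from the height hypothesis and then read off the new height row by row, whereas the inequalities \eqref{eq:particular-matrix-<m}--\eqref{eq:particular-matrix->m} the paper invokes are stated as comparisons with $\deg P_l$ and, as written, need not hold (e.g.\ if $P_l$ has small degree); you also correctly isolate the wrap-around case $j=n-1$, which the paper does not spell out.
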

\begin{proof}
  The assertion follows from (\ref{eq:particular-matrix-<m}) and
  (\ref{eq:particular-matrix->m}) by a reasoning similar to the one
  used in the proof of Lemma
  \ref{lem:height-transf-particular-matrix}.
\end{proof}

\section{Generators of the interpolation problem}
\label{sec:generators}

In this section we begin the detailed analysis of the interpolation
problem set forth in the Introduction. Let us first provide an
alternative interpretation of the interpolation problem given by
(\ref{eq:linear-combination}).

Clearly, for all
   $j\in\{1,\dots,N\}$, one has
\begin{equation*}
\left|\sum_{k=1}^n\alpha_{k}(j)P_k(z_j)\right|^2\!=
\left(\sum_{k=1}^n\overline{\alpha_{k}(j)P_k(z_j)}\right)
\left(\sum_{k=1}^n\alpha_{k}(j)P_k(z_j)\right)=
\inner{\boldsymbol{p}(z_j)}{\sigma_j\boldsymbol{p}(z_j)}\,,
\end{equation*}
where $\inner{\cdot}{\cdot}$ is the inner product in $\complex^n$ with
the first argument being anti-linear, and
\begin{align}\sigma_j:=\left(
\label{eq:sigma-matrix}
 \begin{array}{ccccc}
 \left|\alpha_1(j)\right|^2&\overline{\alpha_1(j)}\alpha_2(j)&\overline{\alpha_1(j)}\alpha_3(j)&\ldots&\overline{\alpha_1(j)}\alpha_n(j)\\
 \overline{\alpha_2(j)}\alpha_1(j)&\left|\alpha_2(j)\right|^2&\overline{\alpha_2(j)}\alpha_3(j)&\ldots&\overline{\alpha_2(j)}\alpha_n(j)\\
 \overline{\alpha_3(j)}\alpha_1(j)&\overline{\alpha_3(j)}\alpha_2(j)&\left|\alpha_3(j)\right|^2&\ldots&\overline{\alpha_3(j)}\alpha_n(j)\\
 \vdots&\vdots&\vdots&\ddots&\vdots\\
 \overline{\alpha_n(j)}\alpha_1(j)&\overline{\alpha_n(j)}\alpha_2(j)&\overline{\alpha_n(j)}\alpha_3(j)&\ldots&\left|\alpha_n(j)\right|^2
 \end{array}\right)\,.
 \end{align}
 Thus,
 the interpolation problem (\ref{eq:linear-combination}) is equivalent
 to finding $\boldsymbol{p}\in\mathbb{P}$ such that
 \begin{equation}
\label{eq:auxilliary-problem}
\inner{\boldsymbol{p}(z_j)}{\sigma_j\boldsymbol{p}(z_j)}=0\,,\quad j=1,\ldots,N.
 \end{equation}
 \begin{remark}
   \label{rem:alternative-interpolation}
   It follows  from (\ref{eq:sigma-matrix}) that $\sigma_j$ is a
   nonnegative rank-one matrix. Moreover, for every nonnegative rank-one
   $n\times n$ matrix $\sigma$, there is a collection of complex numbers
   $\alpha_1,\dots,\alpha_n$ such that
\begin{align}\sigma=\left(
\label{eq:sigma-matrix-generic}
 \begin{array}{ccccc}
 \left|\alpha_1\right|^2&\overline{\alpha_1}\alpha_2&\overline{\alpha_1}\alpha_3&\ldots&\overline{\alpha_1}\alpha_n\\
 \overline{\alpha_2}\alpha_1&\left|\alpha_2\right|^2&\overline{\alpha_2}\alpha_3&\ldots&\overline{\alpha_2}\alpha_n\\
 \overline{\alpha_3}\alpha_1&\overline{\alpha_3}\alpha_2&\left|\alpha_3\right|^2&\ldots&\overline{\alpha_3}\alpha_n\\
 \vdots&\vdots&\vdots&\ddots&\vdots\\
 \overline{\alpha_n}\alpha_1&\overline{\alpha_n}\alpha_2&\overline{\alpha_n}\alpha_3&\ldots&\left|\alpha_n\right|^2
 \end{array}\right)\,.
 \end{align}
Thus, the interpolation problem can be stated as the problem of
finding $\boldsymbol{p}\in\mathbb{P}$ such that
(\ref{eq:auxilliary-problem}) holds for any collection of nonnegative rank-one
 $n\times n$  matrices $\{\sigma_j\}_{j=1}^N$.
\end{remark}
\begin{definition}
\label{def:solution-problem}
Let us denote by
$\mathbb{S}(n,N)=\mathbb{S}(\{\sigma_j\}_{j=1}^N,\{z_j\}_{j=1}^N)$ the
set of all solutions of the interpolation problem
(\ref{eq:linear-combination}), where $\sigma_j$ is given by
(\ref{eq:sigma-matrix}). We use the notation $\mathbb{S}(n,N)$ when
the concrete matrices $\{\sigma_j\}_{j=1}^N$ and the interpolation
nodes $\{z_j\}_{j=1}^N$ are not relevant.
\end{definition}
Note that an interpolation problem is completely determined by the
sets $\{\sigma_j\}_{j=1}^N$ and $\{z_j\}_{j=1}^N$.  Since a solution
of (\ref{eq:linear-combination}) is an element of $\mathbb{P}$, one
obviously has $\mathbb{S}(n,N)\subset\mathbb{P}$. Clearly, in the same way it
happens for $\mathbb{P}$, the space $\mathbb{S}(n,N)$ is a module over
the ring of scalar polynomials.
\begin{remark}
  \label{rem:equal-nodes}
  Consider the interpolation problem given by $\{\sigma_j\}_{j=1}^N$ and
  $\{z_j\}_{j=1}^N$, if it turns out that $z_N=z_{N-1}$ and the
  vectors
  $\boldsymbol{\alpha}(j_N):=(\alpha_1(j_N),\dots,\alpha_n(j_N))^t$ and
  $\boldsymbol{\alpha}(j_{N-1})$ are linearly dependent, then
  \begin{equation*}
    \mathbb{S}(\{\sigma_j\}_{j=1}^N,\{z_j\}_{j=1}^N)=\mathbb{S}(\{\sigma_j\}_{j=1}^{N-1},\{z_j\}_{j=1}^{N-1})\,.
  \end{equation*}
  For the sake of convenience, we will suppose below
  that the vectors $\boldsymbol{\alpha}$ for the coinciding nodes are
  linearly independent, and in the case they are linearly dependent,
  the statements of the results should be changed in an evident manner
  to the corresponding statements with less nodes.
\end{remark}

Let $\mathbb{M}(\boldsymbol{r})$ be the subset of $\mathbb{P}$ given
by
\begin{equation}
\label{eq:generator-set}
\mathbb{M}(\boldsymbol{r}) := \left\{\boldsymbol{p}\in\mathbb{P}:
  \:\boldsymbol{p}=S\boldsymbol{r}, \: \boldsymbol{r}\in\mathbb{P}, \:
  S \:\text{ is an arbitrary scalar polynomial}\right\}.
\end{equation}
We say that $\mathbb{M}(\boldsymbol{r})$ is the set of vector
polynomials generated by $\boldsymbol{r}$. Note that
$\mathbb{M}(\boldsymbol{r})$ is a linear set and for any
nonzero $\boldsymbol{q}\in\mathbb{M}(\boldsymbol{r})$ there exists
$k\in\nats\cup\{0\}$ such that
\begin{equation}
  \label{eq:M-height}
  h(\boldsymbol{q})= h(\boldsymbol{r})+ nk\,.
\end{equation}
Thus, all nonzero vector polynomials of $\mathbb{M}(\boldsymbol{r})$
are such that their heights are in the same equivalence class of
$\integers/n\integers$.

One of the main goals of this section is to show that
$\mathbb{S}(n,N)$ has exactly $n$ generators, that
is, there are $n$ vector polynomials
$\boldsymbol{r}_1,\dots,\boldsymbol{r}_{n}$ such that
\begin{equation*}
 \mathbb{M}(\boldsymbol{r}_1)\dotplus\dots\dotplus
  \mathbb{M}(\boldsymbol{r}_{n})=\mathbb{S}(n,N)\,.
\end{equation*}
This result is related to a known fact about the set of polynomials
being solutions of the rational interpolation
problem in its vector case \cite[Thms.\,3.1,\,3.2]{MR1092122} or the M-Pad\'e
approximation problem \cite[Thm.\,3.1]{MR1199391}.
\begin{definition}
  \label{def:height-set}
  Let $\mathcal{M}$ be an arbitrary subset of
  $\mathbb{P}$. We define the height of
  $\mathcal{M}$ by
\begin{equation}
\label{eq:set-height}
h(\mathcal{M}):=\min \left\{h(\boldsymbol{q}): \:
  \boldsymbol{q}\in\mathcal{M}, \:\boldsymbol{q}\neq 0\right\}\,.
\end{equation}
\end{definition}
\begin{lemma}
\label{lem:same-height}
Let $\mathcal{M}$ be a linear subset of
$\mathbb{S}(n,N)$ and
$\boldsymbol{r},\boldsymbol{p}\in\mathcal{M}$ such that
$h(\boldsymbol{r})=h(\boldsymbol{p})=h(\mathcal{M})$, then
$\boldsymbol{r}=c\boldsymbol{p}$ with $c\in\mathbb{C}$.
\end{lemma}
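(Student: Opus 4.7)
The plan is to combine the height-reduction statement of Lemma~\ref{lem:height-properties}(\ref{lem:height-properties-c}) with the linearity of $\mathcal{M}$ and the minimality built into the definition of $h(\mathcal{M})$. First I would set $m:=h(\mathcal{M})=h(\boldsymbol{r})=h(\boldsymbol{p})$; since $h(\mathcal{M})$ is defined as a minimum over nonzero elements, both $\boldsymbol{r}$ and $\boldsymbol{p}$ are nonzero and $m\neq -\infty$. Then I would apply Lemma~\ref{lem:height-properties}(\ref{lem:height-properties-c}) to the pair $\boldsymbol{r},\boldsymbol{p}$ (which share the common height $m$) to produce a scalar $c\in\mathbb{C}$ with $h(\boldsymbol{r}+c\boldsymbol{p})\leq m-1$.

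The crux is ruling out the possibility that $\boldsymbol{r}+c\boldsymbol{p}$ is a nonzero element of $\mathcal{M}$ whose height lies strictly below $h(\mathcal{M})$. Since $\mathcal{M}$ is assumed linear, and both $\boldsymbol{r}$ and $c\boldsymbol{p}$ belong to $\mathcal{M}$, the vector polynomial $\boldsymbol{r}+c\boldsymbol{p}$ is again in $\mathcal{M}$. By the definition of $h(\mathcal{M})$, every nonzero element of $\mathcal{M}$ has height at least $m$; combined with $h(\boldsymbol{r}+c\boldsymbol{p})\leq m-1<m$, this forces $\boldsymbol{r}+c\boldsymbol{p}=0$, i.e.\ $\boldsymbol{r}=-c\boldsymbol{p}$, which is exactly the required conclusion after renaming the constant.

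A small point to verify along the way is that the constant obtained from Lemma~\ref{lem:height-properties}(\ref{lem:height-properties-c}) is actually nonzero: if $c=0$ one would read $h(\boldsymbol{r})\leq m-1$, contradicting $h(\boldsymbol{r})=m$. I do not anticipate a substantive obstacle here; the entire argument rests on the observation that the height-lowering operation provided by Lemma~\ref{lem:height-properties}(\ref{lem:height-properties-c}) cannot escape $\mathcal{M}$ once $\mathcal{M}$ is linear, and this is essentially automatic. The one thing worth emphasizing is that plain $\mathbb{C}$-linearity of $\mathcal{M}$ is sufficient for the proof — the richer $\mathbb{C}[z]$-module structure of $\mathbb{S}(n,N)$ is not needed at this stage.
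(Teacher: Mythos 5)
Your proof is correct and follows essentially the same route as the paper: apply Lemma~\ref{lem:height-properties}(\ref{lem:height-properties-c}) to lower the height below $h(\mathcal{M})$, use linearity to keep the result inside $\mathcal{M}$, and then invoke the minimality in the definition of $h(\mathcal{M})$ to force the combination to vanish. The side remark that the constant must be nonzero is a harmless (and correct) addition not present in the paper's version.
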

\begin{proof}
  From Lemma
  \ref{lem:height-properties}(\ref{lem:height-properties-c}), it
  follows that there exists a complex constant $c$ in $\mathbb{C}$
  such that $h(\boldsymbol{r}+c\boldsymbol{p})\leq
  h(\mathcal{M})-1$. Since $\mathcal{M}$ is linear,
  $\boldsymbol{r}+c\boldsymbol{p}\in\mathcal{M}$, but there is no
  element $\boldsymbol{q}\not\equiv 0$ in $\mathcal{M}$ such that
  $h(\boldsymbol{q})\leq h(\mathcal{M})$. Hence
  $\boldsymbol{r}+c\boldsymbol{p}\equiv0$.
\end{proof}
\begin{definition}
\label{def:0-generator}
We say that $\boldsymbol{r}$ in $\mathbb{S}(n,N)$ is a first generator of
$\mathbb{S}(n,N)$ when $h(\boldsymbol{r})=h(\mathbb{S}(n,N))$.
\end{definition}
Let us denote by $\mathbb{M}_1$ the set $\mathbb{M}(\boldsymbol{r})$
with $\boldsymbol{r}$ being a first generator. Clearly, Lemma
\ref{lem:same-height} implies that $\mathbb{M}_1$ does not depend on
the choice of the first generator.
\begin{theorem}
\label{thm:height-bound-S_n}
If $\boldsymbol{r}$ is a first generator of $\mathbb{S}(n,N)$, then
$h(\boldsymbol{r})\leq N$ for any $N\in\mathbb{N}$.
\end{theorem}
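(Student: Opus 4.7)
The plan is to prove this by a straightforward dimension count, using Lemma~\ref{lem:basis-height} to identify the vector polynomials of bounded height with a finite-dimensional subspace, and then observing that the interpolation conditions impose at most $N$ linear constraints.

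First I would introduce the linear subspace
\begin{equation*}
\mathbb{P}_N:=\{\boldsymbol{p}\in\mathbb{P}:h(\boldsymbol{p})\le N\}.
\end{equation*}
Because $h(\boldsymbol{e}_j)=j-1$, Lemma~\ref{lem:basis-height} gives $\mathbb{P}_N=\mathrm{span}\{\boldsymbol{e}_1,\dots,\boldsymbol{e}_{N+1}\}$, and moreover the expansion $\boldsymbol{p}(z)=\sum_{k=0}^{N}c_k\boldsymbol{e}_{k+1}(z)$ is unique. Hence $\dim\mathbb{P}_N=N+1$.

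Next I would interpret the interpolation problem as a system of linear equations on $\mathbb{P}_N$. For each $j\in\{1,\dots,N\}$ the map
\begin{equation*}
L_j:\boldsymbol{p}\longmapsto\sum_{k=1}^n\alpha_k(j)P_k(z_j)
\end{equation*}
is a linear functional on $\mathbb{P}$, and a vector polynomial $\boldsymbol{p}$ belongs to $\mathbb{S}(n,N)$ if and only if $L_j(\boldsymbol{p})=0$ for $j=1,\dots,N$. Restricting the $L_j$ to $\mathbb{P}_N$ and writing $\boldsymbol{p}=\sum_{k=0}^{N}c_k\boldsymbol{e}_{k+1}$ converts the problem into a homogeneous system of $N$ linear equations in the $N+1$ unknowns $c_0,\dots,c_N$.

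Such a system always admits a nontrivial solution, so there exists $\boldsymbol{p}\in\mathbb{S}(n,N)\cap\mathbb{P}_N$ with $\boldsymbol{p}\ne 0$. By Definition~\ref{def:height-set} this yields $h(\mathbb{S}(n,N))\le h(\boldsymbol{p})\le N$, and then by Definition~\ref{def:0-generator} the first generator $\boldsymbol{r}$ satisfies $h(\boldsymbol{r})=h(\mathbb{S}(n,N))\le N$, as required. There is no serious obstacle in this argument; the only point requiring a moment of care is that coincident interpolation nodes (which may occur under the convention of Remark~\ref{rem:equal-nodes}) still give at most $N$ linear equations, so the dimension-count inequality $N<N+1$ is unaffected.
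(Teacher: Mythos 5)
Your proof is correct and uses a genuinely different, and in fact considerably more economical, strategy than the paper's. The paper proceeds by induction on $N$, reducing an $(N+1)$-node problem to an $N$-node problem via an explicit change of variables $\boldsymbol{r}(z)=A_{l+2}T_{l+2}(z_{N+1}-z)\boldsymbol{q}(z)$ (with technical bookkeeping through Lemmas~\ref{lem:height-transf-matrix}, \ref{lem:height-transf-particular-matrix}, \ref{lem:t_j-modified}); the authors even announce that the point of the argument is to obtain a \emph{constructive algorithm} that produces a solution of height at most $N$. Your argument is purely existential: since $\{\boldsymbol{e}_j\}_{j\ge 1}$ is a basis with $h(\boldsymbol{e}_j)=j-1$, Lemma~\ref{lem:basis-height} (together with Lemma~\ref{lem:height-properties}(\ref{lem:height-properties-a})--(\ref{lem:height-properties-b}) to see that the span of $\boldsymbol{e}_1,\dots,\boldsymbol{e}_{N+1}$ is exactly the set of vectors of height $\le N$) gives $\dim\mathbb{P}_N=N+1$, and the $N$ linear functionals $L_j$ cut this down to a kernel of dimension at least $1$. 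The trade-off is clear: your proof is shorter, cleaner, and immediately handles all cases uniformly (including small $n$ and coincident nodes, without the $l\in\{0,\dots,n-3\}$ versus $l=n-2$ versus $l=n-1$ case split), but it does not yield an explicit recipe for the solution, which the paper's constructive version supplies and may be what the authors want for the spectral-analysis application in \cite{2014arXiv1409.3868K}. As a proof of the stated bound, yours is complete and correct.
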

\begin{proof}
  The goal of this proof is to obtain a constructive algorithm for
  finding a solution whose height is not greater than $N$. Clearly,
  this implies the assertion of the theorem since, by definition, the
  height of the first generator is less than or equal to the height of
  any nonzero solution.

  Our construction is carried out by induction.  For $N=1$, we have a
  solution
 \begin{equation}
\label{eq:inequalities-nodes-n1}
 \boldsymbol{p}(z):=\left(C_1,C_2,0,\dots, 0\right)^{t}\,,
 \end{equation}
 where $C_1=\alpha_2(1)$, $C_2=-\alpha_1(1)$, unless $\alpha_2(1)$ and
 $\alpha_1(1)$ are both zero, in which case $C_1,C_2$ are any nonzero
 constants. Indeed, (\ref{eq:inequalities-nodes-n1}) is solution of
 (\ref{eq:auxilliary-problem}) since
 $\inner{\boldsymbol{p}(z_1)}{\sigma_1\boldsymbol{p}(z_1)}=0$, and
 $h(\boldsymbol{p})\le 1$.

 Now, we suppose that the assertion holds for a fixed $N$ and let us
 show that it also holds for $N+1$. We will reduce the interpolation
 problem with $N+1$ nodes to an interpolation problem with $N$
 nodes, which we know how to solve by the induction hypothesis.

 Write $N=nk+l$ with $l<n$, where, for any fixed $n$, the integers $k$
 and $l$ are uniquely determined. Suppose that $n\geq 3$ and let us
 first prove the assertion for a fixed $l$ in $\{0,1,\dots,n-3\}$.  If
 the matrix (\ref{eq:sigma-matrix}) satisfies that $\alpha_{l+2}(j)=0$
 for all $j\in\left\{1,\ldots,N+1\right\}$, then
 $\sigma_j=T_{l+2}(0)\sigma_j T_{l+2}(0)$. So, by putting the vector polynomial
 $\boldsymbol{p}(z):=\boldsymbol{e}_{l+2}(z)$ (see
 (\ref{eq:basis})), it turns out that $\boldsymbol{p}$ is
 an element of $\mathbb{S}(n,N+1)$. Indeed, since
 $T_{l+2}(0)\boldsymbol{p}(z)=0$, one has
\begin{align*}
\inner{\boldsymbol{p}(z_j)}{\sigma_j\boldsymbol{p}(z_j)} =
\inner{\boldsymbol{p}(z_j)}{T_{l+2}(0)\sigma_j
  T_{l+2}(0)\boldsymbol{p}(z_j)}
 = 0.
\end{align*}
Moreover, $h(\boldsymbol{p})=l+1\leq N+1$. Thus, suppose without loss of
generality that $\alpha_{l+2}(N+1)$ is not equal to zero (otherwise
re-enumerate the points $z_1,\dots,z_{N+1}$).

Consider the matrix $A_{l+2}$ (see (\ref{A_l})), where
\begin{equation*}
a_{l+2,k}:=
\begin{cases}
  \frac{1}{\alpha_{l+2}(N+1)} & \text{if } k=l+2\,,\\[3mm]
-\frac{\alpha_k(N+1)}{\alpha_{l+2}(N+1)} & \text{for } k\in\left\{1,2,\ldots,n\right\}\setminus\left\{l+2\right\}\,.
\end{cases}
\end{equation*}
With these settings, it is straightforward to verify that
\begin{equation}
\label{eq:A-sigma-A-n+1}
A_{l+2}^*\sigma_{N+1}A_{l+2}=
\diag\{d_k\}_{k=1}^n\,,\quad
d_k:=\
\begin{cases}
  1 & \text{if } k=l+2\,,\\
  0 & \text{otherwise}\,,
\end{cases}
\end{equation}
Moreover, since $\sigma_j$ is a nonnegative rank-one matrix for every
$j\in\{1,\dots,N\}$, the same is true for
$A_{l+2}^*\sigma_{j}A_{l+2}$. Therefore (see Remark~\ref{rem:alternative-interpolation}),
for any $j\in\left\{1,\ldots,N\right\}$, there are complex numbers
$\beta_1(j),\ldots,\beta_n(j)$ such that
\begin{align}
\label{eq:A-sigma-A-j}
A_{l+2}^*\sigma_{j}A_{l+2}=\left(
 \begin{array}{ccccc}
 \left|\beta_1(j)\right|^2&\overline{\beta_1(j)}\beta_2(j)&\overline{\beta_1(j)}\beta_3(j)&\ldots&\overline{\beta_1(j)}\beta_n(j)\\
 \overline{\beta_2(j)}\beta_1(j)&\left|\beta_2(j)\right|^2&\overline{\beta_2(j)}\beta_3(j)&\ldots&\overline{\beta_2(j)}\beta_n(j)\\
 \overline{\beta_3(j)}\beta_1(j)&\overline{\beta_3(j)}\beta_2(j)&\left|\beta_3(j)\right|^2&\ldots&\overline{\beta_3(j)}\beta_n(j)\\
 \vdots&\vdots&\vdots&\ddots&\vdots\\
 \overline{\beta_n(j)}\beta_1(j)&\overline{\beta_n(j)}\beta_1(j)&\overline{\beta_n(j)}\beta_3(j)&\ldots&\left|\beta_n(j)\right|^2
 \end{array}\right)\,.
\end{align}

Now, for all $j\in\left\{1,\ldots,N\right\}$, let
\begin{equation*}
  \gamma_i(j):=
  \begin{cases}
    (z_{N+1}-z_j)\beta_{l+2}(j) & \text{if } i=l+2\,,\\
    \beta_i(j) & \text{for } i\in\left\{1,2,\ldots,n\right\}\setminus\left\{l+2\right\}\,,
  \end{cases}
\end{equation*}
and consider the auxiliary interpolation problem given by
$\{\widetilde{\sigma}_j\}_{j=1}^N$ and $\{z_j\}_{j=1}^N$, where
 \begin{equation*}
 \widetilde{\sigma_j}:=\left(
 \begin{array}{ccccc}
   \left|\gamma_1(j)\right|^2&\overline{\gamma_1(j)}\gamma_2(j)&\overline{\gamma_1(j)}\gamma_3(j)&\ldots&\overline{\gamma_1(j)}\gamma_n(j)\\
   \overline{\gamma_2(j)}\gamma_1(j)&\left|\gamma_2(j)\right|^2&\overline{\gamma_2(j)}\gamma_3(j)&\ldots&\overline{\gamma_2(j)}\gamma_n(j)\\
   \overline{\gamma_3(j)}\gamma_1(j)&\overline{\gamma_3(j)}\gamma_2(j)&\left|\gamma_3(j)\right|^2&\ldots&\overline{\gamma_3(j)}\gamma_n(j)\\
   \vdots&\vdots&\vdots&\ddots&\vdots\\
   \overline{\gamma_n(j)}\gamma_1(j)&\overline{\gamma_n(j)}\gamma_2(j)&\overline{\gamma_n(j)}\gamma_3(j)&\ldots&\left|\gamma_n(j)\right|^2
 \end{array}\right)\,.
 \end{equation*}
 By the induction hypothesis, there is a vector polynomial
 $\boldsymbol{q}$ in
 $\mathbb{S}(\{\widetilde{\sigma}_j\}_{j=1}^N$, $\{z_j\}_{j=1}^N)$ such that
\begin{equation}
  \label{eq:induction-hypothesis}
  h(\boldsymbol{q})\leq N=nk+l\,.
\end{equation}
Define the vector polynomial
 \begin{align}
\label{eq:rz:=-mt_n-m+1z_n+1}
 \boldsymbol{r}(z):=A_{l+2}T_{l+2}(z_{N+1}-z)\boldsymbol{q}(z).
\end{align}
Then, for all $j\in\left\{1,\ldots,N\right\}$,
\begin{align}
\inner{\boldsymbol{r}(z_j)}{\sigma_j\boldsymbol{r}(z_j)} =
&\inner{\boldsymbol{q}(z_j)}{T_{l+2}\left(\overline{z_{N+1}-z_j}\right)
A_{l+2}^*\sigma_jA_{l+2}T_{l+2}(z_{N+1}-z_j)\boldsymbol{q}(z_j)}\nonumber\\
=&\inner{\boldsymbol{q}(z_j)}{\widetilde{\sigma_j}\boldsymbol{q}_{z_j}}=0\,.
\label{eq:part-solution-interpolation}
\end{align}
Also, it follows from
(\ref{eq:A-sigma-A-n+1}) that
\begin{equation*}
  T_{l+2}^*(0)A_{l+2}^*\sigma_{N+1}A_{l+2}T_{l+2}(0)=0\,.
\end{equation*}
Hence
\begin{align*}
\inner{\boldsymbol{r}(z_{N+1})}{\sigma_{N+1}\boldsymbol{r}(z_{N+1})} =&
\inner{\boldsymbol{q}(z_{N+1})}{T_{l+2}(0)A_{l+2}^*\sigma_{N+1}
A_{l+2}T_{l+2}(0) \boldsymbol{q}(z_{N+1})}\\
=&0.
\end{align*}
This last equality and (\ref{eq:part-solution-interpolation}) imply
that $\boldsymbol{r}$ is in
$\mathbb{S}(\{\sigma_j\}_{j=1}^{N+1},\{z_j\}_{j=1}^{N+1})$. Moreover,
it follows from (\ref{eq:induction-hypothesis}) and (\ref{eq:rz:=-mt_n-m+1z_n+1}), by means of Lemmas
\ref{lem:height-transf-particular-matrix} and \ref{lem:t_j-modified},
that
\begin{equation*}
h(\boldsymbol{r})\leq nk+l+1=N+1.
\end{equation*}
Thus, the assertion of the theorem has been proven for $n\ge 3$ and
$l\in\{0,\dots,n-3\}$.

For proving the assertion when $l=n-2$, consider
\begin{equation*}
  \boldsymbol{r}(z):=A_nT_n(z_{N+1}-z)\boldsymbol{q}(z)
\end{equation*}
and, repeating the reasoning above, it is shown that $\boldsymbol{r}$
is in $\mathbb{S}(n,N+1)$. Moreover, since $h(\boldsymbol{q})<nk+n-2$,
Lemma~\ref{lem:t_j-modified} implies that
\begin{equation*}
  h(T_n(z_{N+1}-z)\boldsymbol{q})\le nk+n-1\,.
\end{equation*}
Therefore, Lemma
\ref{lem:height-transf-matrix}(\ref{item:height-transf-matrix-low})
yields $h(\boldsymbol{r})\le N+1$.

The case $l=n-1$ is treated analogously with
\begin{equation*}
  \boldsymbol{r}(z):=A_1T_1(z_{N+1}-z)\boldsymbol{q}(z)
\end{equation*}
being an element of $\mathbb{S}(n,N+1)$. Again, by
Lemma~\ref{lem:t_j-modified},
\begin{equation*}
  h(T_1(z_{N+1}-z)\boldsymbol{q})\le nk+n-1\,.
\end{equation*}
Thus, it follows from
Lemma~\ref{lem:height-transf-particular-matrix}(b) that
$h(\boldsymbol{r})\le N+1$.

It is now clear how to finish the proof when $n<3$.
\end{proof}

\begin{lemma}
\label{lem:existence-solution}
Given an integer $m\geq Nn$, there exists a solution $\boldsymbol{p}$ of
$\mathbb{S}(n,N)$ such that $h(\boldsymbol{p})=m$.
\end{lemma}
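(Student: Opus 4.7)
The plan is to give an explicit construction, for each $m\geq Nn$, of a vector polynomial in $\mathbb{S}(n,N)$ with exactly that height. The key observation is that the scalar polynomial $Q(z):=\prod_{j=1}^{N}(z-z_j)$ vanishes at every interpolation node, so any vector polynomial that has $Q$ as a common factor in every entry trivially satisfies the interpolation condition.

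Concretely, for each $i\in\{1,\ldots,n\}$, consider the constant basis vector $\boldsymbol{e}_i\in\mathbb{P}$ (of height $i-1$), which has a $1$ in position $i$ and zeros elsewhere. The vector polynomial $Q(z)\boldsymbol{e}_i$ has $Q(z)$ in position $i$ and zeros elsewhere, so for each $j\in\{1,\ldots,N\}$,
\begin{equation*}
\sum_{k=1}^{n}\alpha_k(j)\,[Q\boldsymbol{e}_i]_k(z_j)=\alpha_i(j)\,Q(z_j)=0,
\end{equation*}
which shows $Q\boldsymbol{e}_i\in\mathbb{S}(n,N)$. Since $\mathbb{S}(n,N)$ is a module over the ring of scalar polynomials, we may further multiply by any scalar polynomial without leaving $\mathbb{S}(n,N)$.

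Given $m\geq Nn$, I would perform Euclidean division of $m-Nn$ by $n$, writing $m-Nn=nd+(i-1)$ with $d\in\mathbb{N}\cup\{0\}$ and $i\in\{1,\ldots,n\}$, and define
\begin{equation*}
\boldsymbol{p}(z):=z^{d}Q(z)\boldsymbol{e}_i(z).
\end{equation*}
Then $\boldsymbol{p}\in\mathbb{S}(n,N)$ by the module property, and applying (\ref{eq:height-Sp}) twice,
\begin{equation*}
h(\boldsymbol{p})=h(\boldsymbol{e}_i)+n\deg(z^{d}Q(z))=(i-1)+n(d+N)=Nn+nd+(i-1)=m,
\end{equation*}
as required.

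This argument has no real obstacle: the only thing one must verify carefully is that the heights of $\boldsymbol{e}_1,\ldots,\boldsymbol{e}_n$ cover every residue class modulo $n$, so that combining them with scalar multipliers of degree $d\geq 0$ fills in every integer height starting from $Nn$. Note also that Remark~\ref{rem:equal-nodes} causes no trouble, since $Q(z)$ vanishes at each $z_j$ regardless of whether the nodes are distinct.
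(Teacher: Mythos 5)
Your construction is essentially identical to the paper's: the paper takes $\boldsymbol{p}(z)=(0,\dots,0,P_{l+1}(z),0,\dots,0)^t$ with $P_{l+1}(z)=z^k\prod_{j=1}^N(z-z_j)$ in position $l+1$, which with the substitution $k=d$, $l=i-1$ is exactly your $z^dQ(z)\boldsymbol{e}_i$, and the height computation via (\ref{eq:height-Sp}) matches as well. The proposal is correct and takes the same approach.
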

\begin{proof}
  Let $m=(N+k)n+l$ with $k\in\mathbb{N}\cup\{0\}$ and
  $l\in\left\{0,1,\dots,n-1\right\}$. Let us construct a solution
  $\boldsymbol{p}\in\mathbb{S}(n,N)$ such that $h(\boldsymbol{p})=(N+k)n+l$.

Define $\boldsymbol{p}$ as follows
\begin{equation*}
\boldsymbol{p}(z):=\left(0,\dots,0,P_{l+1}(z),0,\dots,0\right)^{t}
\end{equation*}
where $P_{l+1}(z)=z^k\prod_{j=1}^{N}(z-z_j)$ and $z_1,\dots,z_N$ are
the nodes of the interpolation problem
(\ref{eq:linear-combination}). It is straightforward to verify that
$p$ is solution of $\mathbb{S}(n,N)$ and
\begin{equation*}
  h(\boldsymbol{p})=n\deg P^{(l+1)}(z)+l=(N+k)n+l.
\end{equation*}
\end{proof}
Note that Lemma~\ref{lem:existence-solution} and (\ref{eq:M-height})
imply that there are $n$ vector polynomials in $\mathbb{S}(n,N)$ whose
heights are different elements of the factor space
$\integers/n\integers$. We will see later on that there are infinitely
many solutions for every equivalence class of the heights.

\begin{lemma}
\label{lem:form-height-S_n-minus-set-M}
Fix a natural number $m$ such that $1\le m<n$. If
$\boldsymbol{r}_1,\dots,\boldsymbol{r}_m$ are arbitrary elements of
$\mathbb{S}(n,N)$, then
$\mathbb{S}(n,N)\setminus[\mathbb{M}(\boldsymbol{r}_1)+\dots+\mathbb{M}(\boldsymbol{r}_m)]$
is not empty and
$h\left(\mathbb{S}(n,N)\setminus[\mathbb{M}(\boldsymbol{r}_1)+\dots+\mathbb{M}(\boldsymbol{r}_m)]\right)\neq
h(\boldsymbol{r_j})+nk$ for any $j\in\{1,\dots,m\}$ and
$k\in\mathbb{N}\cup\left\{0\right\}$. (In other words,
$h\left(\mathbb{S}(n,N)\setminus[\mathbb{M}(\boldsymbol{r}_1)+\dots+\mathbb{M}(\boldsymbol{r}_m)]\right)$
and $h(\boldsymbol{r_j})$ are different elements of the factor space
$\integers/n\integers$ for any $j\in\{1,\dots,m\}$).
\end{lemma}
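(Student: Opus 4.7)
The plan is to establish the two assertions in order, writing $\mathcal{M}:=\mathbb{M}(\boldsymbol{r}_1)+\dots+\mathbb{M}(\boldsymbol{r}_m)$ for brevity, and to prove nonemptiness of the set difference first so that its minimum height is well defined for the second claim.

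For nonemptiness, I argue by contradiction. Assume $\mathbb{S}(n,N)\subseteq\mathcal{M}$. By Lemma~\ref{lem:existence-solution}, for each $l\in\{0,1,\ldots,n-1\}$ one may pick $\boldsymbol{p}_l\in\mathbb{S}(n,N)$ with $h(\boldsymbol{p}_l)\equiv l\pmod n$. The hypothesis forces each $\boldsymbol{p}_l$ to be a $\complex[z]$-linear combination of $\boldsymbol{r}_1,\ldots,\boldsymbol{r}_m$, so for every $z\in\complex$ the vectors $\boldsymbol{p}_0(z),\ldots,\boldsymbol{p}_{n-1}(z)$ lie in a common subspace of $\complex^n$ of dimension at most $m<n$ and are therefore linearly dependent. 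The determinant of the $n\times n$ matrix with these vectors as columns is a scalar polynomial in $z$ that vanishes everywhere, hence is identically zero, so its columns are linearly dependent over the field $\complex(z)$; clearing denominators yields scalar polynomials $c_0,\ldots,c_{n-1}$, not all zero, with $\sum_{l=0}^{n-1}c_l(z)\boldsymbol{p}_l(z)=0$. By (\ref{eq:height-Sp}), the nonzero summands $c_l\boldsymbol{p}_l$ have heights $n\deg c_l+h(\boldsymbol{p}_l)$ lying in pairwise distinct residue classes modulo $n$; iterated application of Lemma~\ref{lem:height-properties}(\ref{lem:height-properties-a}) then gives $h\bigl(\sum_l c_l\boldsymbol{p}_l\bigr)=\max_l h(c_l\boldsymbol{p}_l)\ge 0$, contradicting that the sum vanishes.

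For the second assertion, let $h^*:=h(\mathbb{S}(n,N)\setminus\mathcal{M})$, a well-defined nonnegative integer by the previous paragraph. Suppose for contradiction that $h^*=h(\boldsymbol{r}_j)+nk$ for some $j\in\{1,\ldots,m\}$ and $k\in\nats\cup\{0\}$. Pick $\boldsymbol{p}\in\mathbb{S}(n,N)\setminus\mathcal{M}$ with $h(\boldsymbol{p})=h^*$; by (\ref{eq:height-Sp}), the element $z^k\boldsymbol{r}_j\in\mathbb{M}(\boldsymbol{r}_j)\subseteq\mathcal{M}$ also has height $h^*$. Lemma~\ref{lem:height-properties}(\ref{lem:height-properties-c}) supplies $c\in\complex$ with $h(\boldsymbol{p}+cz^k\boldsymbol{r}_j)\le h^*-1$. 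The combination $\boldsymbol{q}:=\boldsymbol{p}+cz^k\boldsymbol{r}_j$ lies in $\mathbb{S}(n,N)$, and if $\boldsymbol{q}=0$ or $\boldsymbol{q}\in\mathcal{M}$, then $\boldsymbol{p}=\boldsymbol{q}-cz^k\boldsymbol{r}_j\in\mathcal{M}$, contradicting the choice of $\boldsymbol{p}$. Therefore $\boldsymbol{q}\in\mathbb{S}(n,N)\setminus\mathcal{M}$ with $h(\boldsymbol{q})<h^*$, contradicting the definition of $h^*$.

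The main obstacle is the nonemptiness step, specifically the passage from the pointwise linear dependence of $\boldsymbol{p}_0(z),\ldots,\boldsymbol{p}_{n-1}(z)$ to a genuine $\complex[z]$-linear relation among the $\boldsymbol{p}_l$; the determinant argument handles this cleanly, using only that a polynomial vanishing on all of $\complex$ is the zero polynomial. Once that relation is in hand, both claims follow from the now-familiar height-reduction technique already employed in the proof of Lemma~\ref{lem:same-height}.
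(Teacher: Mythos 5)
Your argument for the second assertion is exactly the paper's: take a minimal-height $\boldsymbol{q}$ in the set difference, pair it with an element of $\mathbb{M}(\boldsymbol{r}_{j_0})$ of the same height, invoke Lemma~\ref{lem:height-properties}(\ref{lem:height-properties-c}) to drop the height, and contradict minimality of $\boldsymbol{q}$. For nonemptiness, however, you take a genuinely different route. The paper dispatches it in one line, saying it ``follows from Lemma~\ref{lem:existence-solution} and (\ref{eq:M-height}) since $m<n$''; the intended pure height-arithmetic reading is that nonzero elements of $\mathcal{M}:=\mathbb{M}(\boldsymbol{r}_1)+\dots+\mathbb{M}(\boldsymbol{r}_m)$ can only have heights in the at most $m<n$ residue classes $\{h(\boldsymbol{r}_j)\bmod n\}$, while Lemma~\ref{lem:existence-solution} supplies solutions of every class. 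That reading is clean when the $h(\boldsymbol{r}_j)$ are pairwise incongruent modulo $n$ (as they are in the recursive construction of Definition~\ref{def:j-generator}), but for truly arbitrary $\boldsymbol{r}_j$ two summands $S_j\boldsymbol{r}_j$ can share a residue class and cancellation could a priori push $h\bigl(\sum_j S_j\boldsymbol{r}_j\bigr)$ into an unoccupied class, so the one-liner wants a supplement. Your determinant argument sidesteps this entirely: the hypothesis $\mathbb{S}(n,N)\subseteq\mathcal{M}$ traps every solution vector, at each fixed $z$, inside a subspace of dimension at most $m<n$, so $\det\bigl(\boldsymbol{p}_0(z)\,\dots\,\boldsymbol{p}_{n-1}(z)\bigr)\equiv 0$ and a nontrivial $\complex[z]$-relation $\sum_l c_l\boldsymbol{p}_l=0$ follows; since the $h(\boldsymbol{p}_l)$ lie in pairwise distinct residue classes, (\ref{eq:height-Sp}) and Lemma~\ref{lem:height-properties}(\ref{lem:height-properties-a}) make that relation impossible. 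This buys you the lemma in full generality at the modest cost of linear algebra over $\complex(z)$, and it is very much in the spirit of the determinant arguments the paper itself deploys later, in Lemma~\ref{lem:independence-of-generators} and Theorem~\ref{thm:conjecture-equality}.
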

\begin{proof}
  That
  $\mathbb{S}(n,N)\setminus[\mathbb{M}(\boldsymbol{r}_1)+\dots+\mathbb{M}(\boldsymbol{r}_m)]$
  is not empty follows from Lemma~\ref{lem:existence-solution} and
  (\ref{eq:M-height}) since $m<n$.
  We prove the second assertion by \emph{reductio ad absurdum}. Suppose
  that, for some $k_0\in\mathbb{N}\cup\left\{0\right\}$ and $j_0\in\{1,\dots,m\}$,
  \begin{equation*}
    h(\mathbb{S}(n,N)\setminus[\mathbb{M}(\boldsymbol{r}_1)+\dots+\mathbb{M}(\boldsymbol{r}_m)])=
  h(\boldsymbol{r}_{j_0})+nk_0\,.
  \end{equation*}
  Hence, there is
  $\boldsymbol{q}\in\mathbb{S}(n,N)\setminus[\mathbb{M}(\boldsymbol{r}_1)
  +\dots+\mathbb{M}(\boldsymbol{r}_m)]$ for which
  $h(\boldsymbol{q})=h(\boldsymbol{r}_{j_0})+nk_0$. Let
  $\boldsymbol{p}\in\mathbb{M}(\boldsymbol{r}_{j_0})$ such that
  $h(\boldsymbol{p})=h(\boldsymbol{r}_{j_0})+nk_0$. Then, by Lemma
  \ref{lem:height-properties}(\ref{lem:height-properties-c}), there is
  $c\in\mathbb{C}$ such that $h(\boldsymbol{q}+c\boldsymbol{p})\leq
  h(\boldsymbol{q})-1$. Clearly,
  $\boldsymbol{q}+c\boldsymbol{p}\in\mathbb{S}(n,N)$ but not in
  $\mathbb{M}(\boldsymbol{r}_1)+\dots+\mathbb{M}(\boldsymbol{r}_m)$. This
  contradicts the fact that $\boldsymbol{q}$ is an element of minimal height in
  $\mathbb{S}(n,N)\setminus[\mathbb{M}(\boldsymbol{r}_1)
  +\dots+\mathbb{M}(\boldsymbol{r}_m)]$.
\end{proof}

\begin{definition}
\label{def:j-generator}
Taking into account Definition~\ref{def:0-generator} and
Lemma~\ref{lem:form-height-S_n-minus-set-M}, for $1<j\le n$, one
defines recursively the $j$-th generator of $\mathbb{S}(n,N)$ as the
vector polynomial $\boldsymbol{r}_j$ in
$\mathbb{S}(n,N)\setminus[\mathbb{M}_1\dotplus\dots\dotplus\mathbb{M}_{j-1}]$
such that
\begin{equation*}
  h(\boldsymbol{r}_j)=
h(\mathbb{S}(n,N)
\setminus[\mathbb{M}_1\dotplus\dots\dotplus\mathbb{M}_{j-1}])
\end{equation*}
and $\mathbb{M}_j:=\mathbb{M}(\boldsymbol{r}_j)$.
\end{definition}
In this definition we have used direct sum ($\dotplus$) since
$\mathbb{M}_k\cap\mathbb{M}_l=\{0\}$ for $k\ne l$. This follows from
the fact that the nonzero vector polynomials in $\mathbb{M}_k$ and
$\mathbb{M}_l$ have different heights as a consequence of
(\ref{eq:M-height}) and
Lemma~\ref{lem:form-height-S_n-minus-set-M}. Clearly, each iteration
of this definition, up to $j=n$, makes sense as a consequence of
Lemma~\ref{lem:form-height-S_n-minus-set-M}.  Note also that
$\mathbb{M}_1\dotplus\dots\dotplus\mathbb{M}_j$ does not depend on the
choice of the $j$-th generator. Indeed, if, along with
$\boldsymbol{r}_j$, the vector polynomial $\boldsymbol{q}$ is a $j$-th
generator and $\boldsymbol{q}$ is not in
$\mathbb{M}_1\dotplus\dots\dotplus\mathbb{M}_j$, then, taking into
account that $h(\mathbb{S}
\setminus[\mathbb{M}_1\dotplus\dots\dotplus\mathbb{M}_{j-1}])$ is not
greater than $h(\mathbb{S}
\setminus[\mathbb{M}_1\dotplus\dots\dotplus\mathbb{M}_{j}])$, it is
straightforward to verify that
Lemma~\ref{lem:form-height-S_n-minus-set-M} yields a contradiction.

\section{Characterization of the solutions}
\label{sec:characterization-solutions}
This section deals with the properties of the generators of the
interpolation problem given in (\ref{eq:linear-combination}). By
elucidating the generators' properties, we are able to give a complete
description of all solution of the interpolation problem.

\begin{remark}
  \label{rem:heights-cover-factor-space}
 Due to Lemma~\ref{lem:form-height-S_n-minus-set-M}, and
 Definitions~\ref{def:0-generator} and \ref{def:j-generator}, one
 immediately obtains that
  \begin{equation*}
    \integers/n\integers=\{h(\boldsymbol{r}_1),\dots,h(\boldsymbol{r}_n)\}.
  \end{equation*}
\end{remark}

The following simple assertion is used to prove
Theorem~\ref{thm:estimation-formula} which gives estimates for the sum
of the heights of generators.

\begin{lemma}
  \label{lem:independence-of-generators}
  Let $n>1$. For $j\in\{1,\dots,n\}$, let $\boldsymbol{r}_j$ be the $j$-th
  generator of $\mathbb{S}(n,N)$. Then, there are infinitely many
  complex numbers $z$ such
  that the vectors $\boldsymbol{r}_1(z),\dots,\boldsymbol{r}_n(z)$
  in $\complex^n$ are linearly independent.
\end{lemma}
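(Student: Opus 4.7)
The plan is to show that the $n\times n$ matrix
\[
M(z):=\bigl(\boldsymbol{r}_{\pi(1)}(z)\,\bigl|\,\boldsymbol{r}_{\pi(2)}(z)\,\bigl|\,\cdots\,\bigl|\,\boldsymbol{r}_{\pi(n)}(z)\bigr)
\]
has determinant $\det M(z)$ that is a nonzero polynomial in $z$, for a suitable reordering $\pi$ of the generators. Since a nonzero polynomial has only finitely many roots, $\det M(z)\neq 0$ for all but finitely many $z$, which gives linear independence of $\boldsymbol{r}_1(z),\dots,\boldsymbol{r}_n(z)$ at infinitely many points.

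First, by Remark~\ref{rem:heights-cover-factor-space}, the heights $h(\boldsymbol{r}_1),\dots,h(\boldsymbol{r}_n)$ form a complete system of representatives modulo $n$, so there is a permutation $\pi$ of $\{1,\dots,n\}$ with $h(\boldsymbol{r}_{\pi(j)})=nk_{\pi(j)}+(j-1)$ for some $k_{\pi(j)}\in\mathbb{N}\cup\{0\}$. Writing $\boldsymbol{r}_{\pi(j)}=(R_{\pi(j),1},\dots,R_{\pi(j),n})^t$, the analysis of Definition~\ref{def:height-polyn-vect} shows that only index $j$ can realize the maximum in the definition of the height, so
\[
\deg R_{\pi(j),j}=k_{\pi(j)},\qquad \deg R_{\pi(j),i}\le k_{\pi(j)}\ \ (i<j),\qquad \deg R_{\pi(j),i}\le k_{\pi(j)}-1\ \ (i>j).
\]
Thus the entry $M_{ij}(z)=R_{\pi(j),i}(z)$ of $M(z)$ satisfies $\deg M_{jj}=k_{\pi(j)}$ exactly, with a strictly smaller degree bound below the diagonal.

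Next, I expand $\det M(z)=\sum_{\sigma\in S_n}\operatorname{sgn}(\sigma)\prod_{i=1}^{n}M_{i,\sigma(i)}(z)$. The diagonal term $\prod_j M_{jj}(z)$ has degree exactly $D:=\sum_{j=1}^n k_{\pi(j)}$ and nonzero leading coefficient. For any $\sigma\neq\mathrm{id}$, the set $\{i:i>\sigma(i)\}$ is nonempty, so
\[
\deg\Bigl(\prod_{i=1}^{n}M_{i,\sigma(i)}\Bigr)\le \sum_{i=1}^{n}k_{\pi(\sigma(i))}-\bigl|\{i:i>\sigma(i)\}\bigr|\le D-1,
\]
using the bijectivity of $\sigma$ to reindex the sum. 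Hence the coefficient of $z^D$ in $\det M(z)$ comes solely from the identity permutation and is nonzero, so $\det M(z)$ is a nonzero polynomial of degree $D$.

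The main (very mild) obstacle is the bookkeeping with $\pi$ and the degree inequalities between the two triangular regions; everything else reduces to the standard observation that a triangular dominant pattern forces a unique leading monomial in the determinant expansion. Once $\det M(z)\not\equiv 0$, its zero set is finite, so $\boldsymbol{r}_1(z),\dots,\boldsymbol{r}_n(z)$ are linearly independent for every $z\in\mathbb{C}$ outside this finite set, which completes the proof.
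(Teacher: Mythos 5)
Your argument is correct and takes a genuinely different route from the paper. The paper proves this lemma by contradiction: assuming eventual linear dependence, one isolates a smallest $k$ for which $\boldsymbol{r}_k(z)=\sum_{l<k}F_l(z)\boldsymbol{r}_l(z)$ with rational $F_l$; clearing denominators yields $S_0\boldsymbol{r}_k=\sum_{l<k}S_l\boldsymbol{r}_l$, and since the summands on the right all have pairwise distinct heights modulo $n$, Lemma~\ref{lem:height-properties}(\ref{lem:height-properties-a}) forces $h(S_0\boldsymbol{r}_k)=h(S_{l_0}\boldsymbol{r}_{l_0})$ for some $l_0$, contradicting that $h(\boldsymbol{r}_k)$ and $h(\boldsymbol{r}_{l_0})$ lie in distinct classes of $\integers/n\integers$. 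You instead argue directly: you reorder the columns so the degree-pattern is ``triangular,'' observe that the identity permutation contributes the unique term of maximal degree $D=\sum_j k_{\pi(j)}$ in the Leibniz expansion, and conclude $\det M\not\equiv 0$. Your degree bookkeeping is right: from $h(\boldsymbol{r}_{\pi(j)})=nk_{\pi(j)}+(j-1)$ one gets $\deg R_{\pi(j),j}=k_{\pi(j)}$ exactly (since $j-i\equiv 0\pmod n$ with $|j-i|<n$ forces $i=j$), and $\deg R_{\pi(j),i}\le k_{\pi(j)}$ for $i<j$, $\le k_{\pi(j)}-1$ for $i>j$; and for $\sigma\ne\mathrm{id}$ the set $\{i:i>\sigma(i)\}$ is indeed nonempty. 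Nothing you invoke depends on the lemma itself, so there is no circularity. Interestingly, the paper performs essentially the same determinant-degree computation later, inside the proof of Theorem~\ref{thm:conjecture-equality}, where it defines $Q(z)=\det(\boldsymbol{r}_1\dots\boldsymbol{r}_n)$ and uses the lemma to rule out $Q\equiv0$; your approach proves the nonvanishing of $Q$ directly, which both establishes the lemma and, as a byproduct, would simplify the paper's proof that $\sum_j h(\boldsymbol{r}_j)\ge Nn+\frac{n(n-1)}{2}$, since $\deg Q=\sum_j k_{\pi(j)}$ together with $Q(z_1)=\dots=Q(z_N)=0$ gives the bound immediately. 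The trade-off is that the paper's version of the lemma is shorter and purely height-theoretic, whereas yours front-loads the determinant analysis but yields more structural information.
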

\begin{proof}
  We shall prove the lemma by \emph{reductio ad absurdum}. By
  continuity, if the vectors
  $\boldsymbol{r}_1(z),\dots,\boldsymbol{r}_n(z)$ are linearly
  dependent everywhere but a finite set of points, then they are
  linearly dependent everywhere. Suppose, $k\in\{2,\dots,n\}$ is the
  number for which the vector $\boldsymbol{r}_k(z)$ is a linear
  combination of $\boldsymbol{r}_1(z),\dots,\boldsymbol{r}_{k-1}(z)$
  for every $z$, but
  $\boldsymbol{r}_1(z_0),\dots,\boldsymbol{r}_{k-1}(z_0)$ are still
  linearly independent for a certain $z_0$. The latter means that
  \begin{equation*}
    \rank \left( \boldsymbol{r}_1(z_0) \, \dots \, \boldsymbol{r}_{k-1}(z_0)\right) = k-1\,.
  \end{equation*}
  By continuity, this rank also equals $k-1$ in some neighborhood of
  $z_0$. Also, by the hypothesis, for any $z\in\complex$,
  \begin{equation*}
    \boldsymbol{r}_k(z)=\sum_{l=1}^{k-1}F_l(z)\boldsymbol{r}_l(z)\,.
  \end{equation*}
  Solving this linear system for the unknown $F_l(z)$, the rank of the
  matrix being equal to $k-1$, we see that for any
  $l\in\{1,\dots,k-1\}$, $F_l$ is a rational function of
  $z$. Therefore there are scalar polynomials $S_0,\dots,S_{k-1}$ such
  that
 \begin{equation*}
   S_0(z)\boldsymbol{r}_k(z)=\sum_{l=1}^{k-1}S_l(z)\boldsymbol{r}_l(z)\,.
 \end{equation*}
 By Definition~\ref{def:j-generator}, taking into account
 (\ref{eq:M-height}) and Lemma~\ref{lem:form-height-S_n-minus-set-M},
 one concludes that all $S_l\boldsymbol{r}_l$ ($l\in\{1,\dots,k-1\}$)
 have different heights. Hence, by
 Lemma~\ref{lem:height-properties}(\ref{lem:height-properties-a})
 there is $l_0\in\{1,\dots,k-1\}$ such that
 \begin{equation}
   \label{eq:height-contradiction}
   h(S_0\boldsymbol{r}_k)=h(S_{l_0}\boldsymbol{r}_{l_0})\,,
 \end{equation}
 but according to (\ref{eq:M-height}) and
 Lemma~\ref{lem:form-height-S_n-minus-set-M} the r.\,h.\,s and the
 l.\,h.\,s of (\ref{eq:height-contradiction}) are different elements
 of $\integers/n\integers$. This contradiction finishes the proof.
\end{proof}
\begin{theorem}
  \label{thm:estimation-formula}
  Let $n>1$. For $l\in\{1,\dots,n\}$, let $\boldsymbol{r}_l$ be the $l$-th
  generator of $\mathbb{S}(n,N)$. Then, for any $m\in\{1,\dots,n\}$,
  \begin{equation*}
    \sum_{l=1}^mh(\boldsymbol{r}_l)\le Nm+\frac{m(m-1)}{2}\,.
  \end{equation*}
 In particular, when $m=n$, this gives an estimate of the sum of all generators.
\end{theorem}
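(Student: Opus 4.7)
My approach is to prove the full-sum case $m = n$ via a Wronskian-type determinant, and then derive the partial sums for $m < n$ from the strict monotonicity of generator heights.

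Let $R(z)$ be the $n \times n$ polynomial matrix with $l$-th row equal to $\boldsymbol{r}_l(z)^t$ and put $W(z) := \det R(z)$. By Lemma~\ref{lem:independence-of-generators}, $W \not\equiv 0$. Invoking Remark~\ref{rem:heights-cover-factor-space}, I write $h(\boldsymbol{r}_l) = nq_l + r_l$ with $r_1, \ldots, r_n$ a permutation of $\{0, 1, \ldots, n - 1\}$. The entry-wise degree bounds coming from Definition~\ref{def:height-polyn-vect} imply that in the Leibniz expansion of $\det R$, the permutation $\pi(l) = r_l + 1$ is the unique one realizing the maximum total degree $\sum_l q_l$, with nonzero leading coefficient; hence $\deg W = \sum_l q_l$.

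The central step is to prove $\deg W \le N$. Since each $\boldsymbol{r}_l$ lies in $\mathbb{S}(n, N)$, the interpolation equations place $\boldsymbol{\alpha}(j) := (\alpha_k(j))_k$ in the kernel of $R(z_j)$, forcing $W(z_j) = 0$; when nodes coincide, Remark~\ref{rem:equal-nodes}'s linear-independence assumption enlarges the kernel correspondingly, giving higher-order vanishing of $W$. For the matching upper bound, I argue by contradiction: if $W$ vanishes at some $z_0$ in excess of the multiplicity of $z_0$ as a node (including the case $z_0 \notin \{z_j\}$), then there exist constants $c_l$, not all zero, with $\sum_l c_l \boldsymbol{r}_l(z_0) = 0$, chosen moreover---by exploiting the excess kernel dimension---so that $\boldsymbol{q}(z) := \sum_l c_l \boldsymbol{r}_l(z)/(z - z_0)$ remains in $\mathbb{S}(n, N)$ (the division preserves the conditions at other nodes automatically, and the additional constraints at $z_0$ reduce dimension by at most the node multiplicity). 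Letting $l^*$ denote the largest index with $c_{l^*} \ne 0$, Lemma~\ref{lem:height-properties}(\ref{lem:height-properties-a}) together with the distinct generator heights yields $h\bigl(\sum_l c_l \boldsymbol{r}_l\bigr) = h(\boldsymbol{r}_{l^*})$, so $h(\boldsymbol{q}) = h(\boldsymbol{r}_{l^*}) - n$. Since $\boldsymbol{q}$ has residue $r_{l^*}$ modulo $n$ while no nonzero element of $\mathbb{M}_1 \dotplus \cdots \dotplus \mathbb{M}_{l^* - 1}$ carries that residue, $\boldsymbol{q}$ lies outside this direct sum, contradicting the minimality of $\boldsymbol{r}_{l^*}$ in Definition~\ref{def:j-generator}. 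Therefore $\deg W = N$ and hence $\sum_{l=1}^n h(\boldsymbol{r}_l) = n \sum_l q_l + \sum_l r_l = nN + n(n-1)/2$, settling the case $m = n$.

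For $m < n$, the strict monotonicity $h(\boldsymbol{r}_1) < h(\boldsymbol{r}_2) < \cdots < h(\boldsymbol{r}_n)$---forced by integer heights with distinct residues modulo $n$---gives $h(\boldsymbol{r}_l) \le h(\boldsymbol{r}_m) - (m - l)$ for $l \le m$ and $h(\boldsymbol{r}_l) \ge h(\boldsymbol{r}_m) + (l - m)$ for $l > m$. Summing produces
\begin{equation*}
  \sum_{l=1}^m h(\boldsymbol{r}_l) \le m\, h(\boldsymbol{r}_m) - \tfrac{m(m-1)}{2}, \qquad \sum_{l=m+1}^n h(\boldsymbol{r}_l) \ge (n - m)\, h(\boldsymbol{r}_m) + \tfrac{(n-m)(n-m+1)}{2}.
\end{equation*}
Solving the first for $h(\boldsymbol{r}_m)$ and inserting it into the second, then combining with the full-sum bound $\sum_{l=1}^n h(\boldsymbol{r}_l) \le nN + n(n-1)/2$ eliminates $h(\boldsymbol{r}_m)$ and yields $(n/m)\sum_{l=1}^m h(\boldsymbol{r}_l) \le nN + n(m-1)/2$, i.e. $\sum_{l=1}^m h(\boldsymbol{r}_l) \le mN + m(m-1)/2$, as desired.

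The main obstacle is the multiplicity bookkeeping in Step 2, specifically the precise matching between rank deficiency of $R(z_0)$, orders of Smith-form invariant factors, and the order to which $W$ vanishes at a (possibly multiple) node. Making the extraction of the contradictory $\boldsymbol{q}$ rigorous when $z_0$ coincides with a node of multiplicity $k_0$ requires checking that the auxiliary polynomial $\sum_l c_l \boldsymbol{r}_l(z)$ can be made to vanish to the correct order against the node's interpolation constraints, which may necessitate an iteration based on the structure of the invariant factors.
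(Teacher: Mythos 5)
Your proposal is a genuinely different strategy from the paper's. The paper proves the partial-sum bound \emph{first}, by an inductive construction: starting from the first generator it repeatedly appends auxiliary interpolation nodes (with coefficients engineered to annihilate the earlier generators on disjoint blocks of new nodes) and invokes Theorem~\ref{thm:height-bound-S_n} to produce a new solution of controlled height lying outside $\mathbb{M}_1\dotplus\dots\dotplus\mathbb{M}_m$. The determinant $\det(\boldsymbol{r}_1\dots\boldsymbol{r}_n)$ only enters afterwards, in Theorem~\ref{thm:conjecture-equality}, to supply the reverse inequality $\sum_j h(\boldsymbol{r}_j)\ge Nn+\tfrac{n(n-1)}{2}$; the paper then gets the equality by combining the two. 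You reverse this: establish the equality directly from the determinant, then extract the partial sums from strict monotonicity of the generator heights. Your algebraic deduction of the partial sums from the full sum is correct -- I verified that the two inequalities $\sum_{l\le m}h(\boldsymbol{r}_l)\le m\,h(\boldsymbol{r}_m)-\tfrac{m(m-1)}{2}$ and $\sum_{l>m}h(\boldsymbol{r}_l)\ge(n-m)h(\boldsymbol{r}_m)+\tfrac{(n-m)(n-m+1)}{2}$, combined with the full-sum identity, do eliminate $h(\boldsymbol{r}_m)$ and produce $\sum_{l\le m}h(\boldsymbol{r}_l)\le Nm+\tfrac{m(m-1)}{2}$. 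Also your Leibniz-formula analysis showing $\deg W=\sum_l q_l$ with the ``diagonal'' permutation dominating is correct and is in fact the same computation as the paper's~(\ref{eq:degree-of-Q}).

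However, there is a genuine gap exactly where you flag it, and it is not merely ``bookkeeping.'' Your contradiction argument for $\deg W\le N$ rests on the implication: \emph{$W$ vanishes at $z_0$ to order exceeding the node multiplicity $k_0$} $\implies$ \emph{one can choose $c$ in the left kernel of $R(z_0)$ so that $\boldsymbol{q}=\bigl(\sum_l c_l\boldsymbol{r}_l\bigr)/(z-z_0)$ still lies in $\mathbb{S}(n,N)$.} This is false as stated, because the order of vanishing of $\det R$ at $z_0$ can strictly exceed the nullity of $R(z_0)$: e.g.\ a Smith form $\diag\bigl((z-z_0)^2,1,\dots,1\bigr)$ has determinant vanishing to order $2$ while the kernel is only one-dimensional. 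So excess vanishing does \emph{not} give you ``excess kernel dimension'' to play with. Concretely, when $z_0$ is a node of multiplicity $k_0$ and $\dim\ker R(z_0)=k_0$ exactly, every left-null vector $c$ yields $\sum_lc_l\boldsymbol{r}_l(z_0)=0$, but after dividing by $(z-z_0)$ the value $\boldsymbol{q}(z_0)$ is the derivative $\sum_lc_l\boldsymbol{r}_l'(z_0)$, and nothing forces $\inner{\boldsymbol{\alpha}(j)}{\boldsymbol{q}(z_0)}=0$ for the $k_0$ indices $j$ with $z_j=z_0$. You would either need to restrict to pairwise distinct nodes (weakening the theorem, since the paper explicitly admits repeated nodes in Remark~\ref{rem:equal-nodes}) or develop a genuine higher-order/invariant-factor argument; neither is done. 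The paper's constructive route sidesteps this entirely because the auxiliary nodes it appends are chosen fresh, away from the original ones, so multiplicity issues never arise.

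For the clean subcase $z_0\notin\{z_j\}$ your extraction argument is fine: $\boldsymbol{q}\ne0$ because the generators are linearly independent over $\complex$, $h(\boldsymbol{q})=h(\boldsymbol{r}_{l^*})-n$ by Lemma~\ref{lem:height-properties}(\ref{lem:height-properties-a}) and~(\ref{eq:height-Sp}), and $\boldsymbol{q}\notin\mathbb{M}_1\dotplus\dots\dotplus\mathbb{M}_{l^*-1}$ by the residue-class separation of Lemma~\ref{lem:form-height-S_n-minus-set-M}, contradicting Definition~\ref{def:j-generator}. So the idea is attractive and would, if the multiplicity case were closed, give a unified shorter proof of both Theorems~\ref{thm:estimation-formula} and~\ref{thm:conjecture-equality}; but as written the multiplicity case is a hole, not a detail.
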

\begin{proof}
  The proof is carried out inductively with respect to $m$. For $m=1$,
  the assertion has already been proven in
  Theorem~\ref{thm:height-bound-S_n}.\\[3mm]
Step 1. ($m=1\leadsto m=2$)

Let $N_1$ be an integer such that
\begin{equation}
  \label{eq:def-N0}
  0\le N_1\le N\,\quad\text{and}\quad h(\boldsymbol{r}_1)=N-N_1\,.
\end{equation}
Since $\boldsymbol{r}_1$ is a solution of
minimal height of the interpolation problem given by
$\{\sigma_j\}_{j=1}^N$ and $\{z_j\}_{j=1}^N$, it turns out that
$\boldsymbol{r}_1(\widetilde{z})$ does not vanish for any
$\widetilde{z}\ne z_1,\dots,z_N$. Indeed, otherwise
$\boldsymbol{r}_1(z)/(z-\widetilde{z})$ would be a solution of the
interpolation problem whose height is less than
$h(\boldsymbol{r}_1)$. Thus, choose the numbers
$z_{N+1},\dots,z_{N+N_1+1}$ each one of which is not equal to
$z_1,\dots,z_N$, and consider the interpolation problem
$\{\sigma_j\}_{j=1}^{N+N_1 +1}$ and $\{z_j\}_{j=1}^{N+N_1+1}$, where
the new matrices $\sigma_j$ are given by
\begin{equation}
  \label{eq:new-sigmas}
  \sigma_j:=
\left(
 \begin{array}{ccccc}
   \left|R_1(z_j)\right|^2&R_1(z_j)\overline{R_2(z_j)}&R_1(z_j)\overline{R_3(z_j)}&\ldots&R_1(z_j)\overline{R_n(z_j)}\\
R_2(z_j)\overline{R_1(z_j)}&\left|R_2(z_j)\right|^2&R_2(z_j)\overline{R_3(z_j)}&\ldots&R_2(z_j)\overline{R_n(z_j)}\\
R_3(z_j)\overline{R_1(z_j)}&R_3(z_j)\overline{R_2(z_j)}&\left|R_3(z_j)\right|^2&\ldots&R_3(z_j)\overline{R_n(z_j)}\\
   \vdots&\vdots&\vdots&\ddots&\vdots\\
R_n(z_j)\overline{R_1(z_j)}&R_n(z_j)\overline{R_2(z_j)}&R_n(z_j)\overline{R_3(z_j)}&\ldots&\left|R_n(z_j)\right|^2
 \end{array}\right)
\end{equation}
for $j=N+1,\dots,N+N_1+1$. Here the notation
$\boldsymbol{r}_1(z)=(R_1(z),\dots,R_n(z))^t$ has been used. According
to Theorem~\ref{thm:height-bound-S_n}, there is
$\boldsymbol{r}$ in $\mathbb{S}(\{\sigma_j\}_{j=1}^{N+N_1
  +1},\{z_j\}_{j=1}^{N+N_1+1})$ such that
\begin{equation}
  \label{eq:secon-gen-aux}
  h(\boldsymbol{r})\le N+N_1+1\,.
\end{equation}
Let us show that $\boldsymbol{r}\not\in\mathbb{M}_1$. To this end,
suppose on the contrary that
$\boldsymbol{r}(z)=S(z)\boldsymbol{r}_1(z)$ for some nonzero scalar
polynomial $S$. Taking into account (\ref{eq:new-sigmas}), it is
straightforward to verify that, for $j=N+1\,\dots,N+N_1+1$, one has
\begin{equation*}
  \inner{S(z_j)\boldsymbol{r}_1(z_j)}
  {\sigma_jS(z_j)\boldsymbol{r}_1(z_j)}=\abs{S(z_j)}^2
  \left(|R_1(z_j)|^2+\dots+|R_n(z_j)|^2\right)^2\,.
\end{equation*}
Therefore, by the way the
nodes $\{z_j\}_{j=N+1}^{N+N_1+1}$ have been chosen, the following
should hold
\begin{equation*}
  S(z_{N+1})=\dots=S(z_{N+N_1+1})=0\,.
\end{equation*}
Thus, $\deg S\ge N_1+1$. This inequality together with
(\ref{eq:height-Sp}) and (\ref{eq:def-N0}) imply that
$h(\boldsymbol{r})\ge N-N_1+n(N_1+1)$ which contradicts
(\ref{eq:secon-gen-aux}). Finally, observe that $\boldsymbol{r}$ is
in $\mathbb{S}(\{\sigma_j\}_{j=1}^{N},\{z_j\}_{j=1}^{N})$.\\[3mm]
Step 2. ($m>1\leadsto m+1\le n$)

Since the assertion is assumed to be proven for $m>1$, one can define
recursively the numbers $N_1,\dots,N_m$ such that for any
$l\in\{1,\dots,m\}$, the following holds
\begin{equation}
  \label{eq:def-N-k}
  h(\boldsymbol{r}_l)=N+(l-1)+N_{l-1}-N_l\,,
\end{equation}
where it is assumed that $N_0=0$.  We shall prove that there is a
vector polynomial $\boldsymbol{r}$ in
$\mathbb{S}(\{\sigma_j\}_{j=1}^{N},\{z_j\}_{j=1}^{N})$ such that
\begin{equation*}
   h(\boldsymbol{r})\le N+m+N_m\quad\text{ and }\quad
   \boldsymbol{r}\not\in\mathbb{M}_1\dotplus\dots\dotplus
     \mathbb{M}_{m}\,.
\end{equation*}
From this, the assertion of the theorem clearly will follow.

Consider set $I:=\{N+1,\dots,N+m+N_m\}$ and the sets
\begin{align*}
  I_1:&=\{N+1,\dots,N+L_1+1\}\,,\\
  I_2:&=\{N+L_1+2,\dots,
  N+L_2+2\}\,,\\
  \vdots\phantom{b:}&\phantom{=bbbc}\vdots\\
  I_m:&=\{N+L_{m-1}+m,\dots,N+m+N_m\}\,,
\end{align*}
where
\begin{equation*}
  L_k:=\sum_{j=1}^k\floor{\frac{m-j+1+N_m+N_j-N_{j-1}}{n}}\,.
\end{equation*}
Here $\floor{\cdot}$ is the floor function and it is again assumed that
$N_0=0$. Thus, $\{I_j\}_{j=1}^m$ is a partition of $I$, i.\,e.,
\begin{equation*}
  I=\bigcup_{j=1}^mI_j\quad\text{ and }\quad j\ne l\implies
  I_j\cap I_l=\emptyset\,.
\end{equation*}

Let $z_0\in\complex$ be such that the vectors
$\boldsymbol{r}_1(z_0),\dots,\boldsymbol{r}_m(z_0)$ are linearly
independent. The existence of such number is provided by
Lemma~\ref{lem:independence-of-generators}. Since the entries of each
of those vectors are polynomials, the vectors
$\boldsymbol{r}_1(z),\dots,\boldsymbol{r}_m(z)$ are also linearly
independent for any $z$ in a neighborhood of $z_0$. Take the points
$z_{N+1},\dots,z_{N+m+N_m}$ in this neighborhood such that
\begin{equation*}
  \{z_{N+1},\dots,z_{N+m+N_m}\}\cap\{z_{1},\dots,z_{N}\}=\emptyset\,,
\end{equation*}
and define the vectors
$\boldsymbol{\alpha}(j)=(\alpha_1(j),\dots,\alpha_n(j))^t$ in such a way that,
for each $l\in\{1,\dots,m\}$,
\begin{equation}
  \label{eq:conditions-on-new-alphas}
  \inner{\boldsymbol{\alpha}(j)}{\boldsymbol{r}_l(z_j)}\ne0
  \quad\text{ and }\quad\inner{\boldsymbol{\alpha}(j)}{\boldsymbol{r}_k(z_j)}=0
\end{equation}
for $k$ in $\{1,\dots,m\}\setminus\{l\}$ and $j\in I_l$. Note that the
linear independence of the vectors $\boldsymbol{r}_l(z_j)$ for any $l$
in $\{1,\dots,m\}$ and $j\in I$ guarantees the existence of
$\boldsymbol{\alpha}(j)$, $j\in I$, with the required properties.

For $j\in I$, define the matrices $\sigma_j$ using
(\ref{eq:sigma-matrix}) with the numbers $\alpha_k(j)$ given above and
consider the interpolation problem given by
$\{\sigma_j\}_{j=1}^{N+m+N_m}$ and $\{z_j\}_{j=1}^{N+m+N_m}$. By
Theorem~\ref{thm:height-bound-S_n}, there is $\boldsymbol{r}$ in
$\mathbb{S}(\{\sigma_j\}_{j=1}^{N+m+N_m},\{z_j\}_{j=1}^{N+m+N_m})$
such that $h(\boldsymbol{r})$ is not greater than $N+m+N_m$. It turns
out that $\boldsymbol{r}$ is not in
$\mathbb{M}_1\dotplus\dots\dotplus\mathbb{M}_m$, because if one
assumes
\begin{equation}
  \label{eq:wrong-assumption}
  \boldsymbol{r}(z)=\sum_{k=1}^mS_k(z)\boldsymbol{r}_k(z)
\end{equation}
with $S_k(z)$ being a scalar polynomial ($k\in\{1,\dots,m\}$), a
contradiction will follow. Indeed, one verifies from
(\ref{eq:conditions-on-new-alphas}) and (\ref{eq:wrong-assumption})
that
\begin{equation*}
 \inner{\boldsymbol{r}(z_j)}{\sigma_j\boldsymbol{r}(z_j)}=0\quad\text{
   for }\quad j\in I\,,
\end{equation*}
implies that
\begin{equation}
  \label{eq:degree-greater}
  \text{either }\quad S_l\equiv 0\quad\text{or}\quad\deg S_l\ge L_l-L_{l-1}+1\quad (L_0=0)
\end{equation}
for $l\in\{1,\dots,m-1\}$ and
\begin{equation*}
  \text{either }\quad S_m\equiv 0\quad\text{or}\quad\deg S_m\ge N_m-L_{m-1}+1\,.
\end{equation*}
On the other hand, taking into account
(\ref{eq:height-Sp}), Lemma~\ref{lem:height-properties}(a), and
Lemma~\ref{lem:form-height-S_n-minus-set-M}, one obtains
after straightforward calculations that
\begin{equation}
  \label{eq:degree-less}
  \deg S_l\le L_l-L_{l-1}\quad\text{ for } l\in\{1,\dots,m\}\,.
\end{equation}
It follows from (\ref{eq:degree-greater}) and (\ref{eq:degree-less})
that
\begin{equation*}
  S_1(z)\equiv\dots\equiv S_{m-1}(z)\equiv 0\,.
\end{equation*}
Analogously, to prove that $S_m\equiv 0$, one shows that
$\deg S_m\ge N_m-L_m+1$ is incompatible with
(\ref{eq:degree-less}) for $l=m$. This is done by verifying that
\begin{equation}
\label{eq:inequality-to-prove}
  N_m-L_m+1>0\,.
\end{equation}
In view of (\ref{eq:def-N-k}) and
Lemma~\ref{lem:form-height-S_n-minus-set-M}, the numbers
$m-j+1+N_m+N_j-N_{j-1}$, for $j\in\{1,\dots,m\}$, are different
elements of the space $\integers/n\integers$. Therefore there exists a permutation
$\{a_1,\dots,a_n\}$ of $\{0,\dots,n-1\}$, such that
\begin{equation*}
  \floor{\frac{m-j+1+N_m+N_j-N_{j-1}}{n}}=\frac{m-j+1+N_m+N_j-N_{j-1}-a_j}{n}\,.
\end{equation*}
There is at
most one $j$ in $\{1,\dots,m\}$ such that $a_j=0$. Moreover,
\begin{equation*}
  \sum_{j=1}^ma_j\ge\sum_{j=1}^{m}(j-1)\,.
\end{equation*}
Thus, the l.\,h.\,s. of the inequality (\ref{eq:inequality-to-prove})
can be rewritten as follows
\begin{equation*}
  N_m-\sum_{j=1}^m\frac{m-j+1+N_m+N_j-N_{j-1}-a_j}{n} +1\,.
\end{equation*}
Since
\begin{equation*}
  \sum_{j=1}^m\left(m-j+1-a_j\right)\le m\,,
\end{equation*}
one has
\begin{align*}
  N_m-&\sum_{j=1}^m\frac{m-j+1+N_m+N_j-N_{j-1}-a_j}{n} +1\\ &\ge
  N_m-\sum_{j=1}^m\frac{m+N_m+N_j-N_{j-1}}{n}+1\\
  &\ge N_m-\frac{m+(m+1)N_m}{n}+1>0\,.
\end{align*}
In the last inequality, it has been used that $m+1\le n$.
\end{proof}
\begin{theorem}
  \label{thm:conjecture-equality}
  Let $\boldsymbol{r}_j$ be the $j$-th generator of
  $\mathbb{S}(n,N)$. It holds true that
  \begin{equation*}
    \sum_{j=1}^nh(\boldsymbol{r}_j)=Nn+\frac{n(n-1)}{2}\,.
  \end{equation*}
\end{theorem}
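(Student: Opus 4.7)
The plan is to obtain the matching lower bound $\sum_{j=1}^n h(\boldsymbol{r}_j)\ge Nn+\frac{n(n-1)}{2}$ by a dimension-counting argument and combine it with Theorem~\ref{thm:estimation-formula} at $m=n$, which supplies the reverse inequality.

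First, I would introduce the finite-dimensional subspace $V_m:=\mathrm{span}\{\boldsymbol{e}_1,\dots,\boldsymbol{e}_{m+1}\}\subset\mathbb{P}$, consisting of vector polynomials of height at most $m$; by Lemma~\ref{lem:basis-height}, $\dim V_m=m+1$. The set $\mathbb{S}(n,N)$ is the kernel of the $N$ linear functionals $\phi_j(\boldsymbol{p}):=\sum_{k=1}^n\alpha_k(j)P_k(z_j)$ on $\mathbb{P}$. Using $\phi_j(\boldsymbol{e}_{nk+i+1})=\alpha_{i+1}(j)z_j^k$, I would show that for all sufficiently large $m$ the restrictions $\phi_j|_{V_m}$ are linearly independent: one groups the indices corresponding to coinciding nodes, uses Vandermonde to decouple distinct node values, and invokes the assumption of Remark~\ref{rem:equal-nodes} that the $\boldsymbol{\alpha}(j)$ at coinciding nodes are linearly independent. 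This gives $\dim(\mathbb{S}(n,N)\cap V_m)=m+1-N$.

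On the other hand, writing $h(\boldsymbol{r}_j)=nk_j+l_j$ with $l_j\in\{0,\dots,n-1\}$, Remark~\ref{rem:heights-cover-factor-space} asserts that $\{l_1,\dots,l_n\}$ is a permutation of $\{0,1,\dots,n-1\}$. Because the heights of nonzero elements of distinct $\mathbb{M}_j$ lie in distinct residue classes modulo $n$, Lemma~\ref{lem:height-properties}(\ref{lem:height-properties-a}) yields $h(\sum_j S_j\boldsymbol{r}_j)=\max_j h(S_j\boldsymbol{r}_j)$, so an element of the direct sum lies in $V_m$ iff every summand does. For $m=nK+n-1$ with $K\ge\max_j k_j$, the condition $h(S_j\boldsymbol{r}_j)\le m$ amounts to $\deg S_j\le K-k_j$, contributing dimension $K-k_j+1$ to $\mathbb{M}_j\cap V_m$. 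Summing over $j$ and using $\sum_j l_j=\tfrac{n(n-1)}{2}$ gives $\dim\bigl((\mathbb{M}_1\dotplus\dots\dotplus\mathbb{M}_n)\cap V_m\bigr)=nK+n-\tfrac{1}{n}\bigl(S-\tfrac{n(n-1)}{2}\bigr)$, where $S:=\sum_{j=1}^n h(\boldsymbol{r}_j)$.

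The inclusion $\mathbb{M}_1\dotplus\dots\dotplus\mathbb{M}_n\subset\mathbb{S}(n,N)$ then yields $nK+n-\tfrac{1}{n}\bigl(S-\tfrac{n(n-1)}{2}\bigr)\le nK+n-N$, i.e.\ $S\ge Nn+\tfrac{n(n-1)}{2}$, and combined with Theorem~\ref{thm:estimation-formula} this finishes the proof. The main obstacle I anticipate is the linear-independence step for the functionals $\phi_j|_{V_m}$: although the Vandermonde argument is routine for distinct nodes, one must carefully handle coinciding nodes by extracting, for each cluster, a vector equation in the corresponding $\boldsymbol{\alpha}(j)$'s and appealing to the assumption of Remark~\ref{rem:equal-nodes} to conclude the vanishing of the coefficients.
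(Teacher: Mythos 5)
Your proposal is correct, and it reaches the lower bound by a genuinely different route than the paper. The paper's proof of $\sum_j h(\boldsymbol{r}_j)\ge Nn+\tfrac{n(n-1)}{2}$ is a determinant argument: it sets $Q(z):=\det\left(\boldsymbol{r}_1(z)\dots\boldsymbol{r}_n(z)\right)$, observes via Remark~\ref{rem:heights-cover-factor-space} and a column permutation that $\deg Q=\sum_j\deg R_{l(j)}^{(j)}$, shows that $Q(z_{l_0})=0$ at every interpolation node because all the $\boldsymbol{r}_j(z_{l_0})$ lie in the hyperplane $\boldsymbol{\alpha}(l_0)^\perp$, and invokes Lemma~\ref{lem:independence-of-generators} to rule out $Q\equiv 0$; the negation of the desired inequality would force $\deg Q<N$, a contradiction. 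Your version instead compares dimensions inside the truncated spaces $V_m$: it counts $\dim\bigl(\mathbb{S}(n,N)\cap V_m\bigr)=m+1-N$ via linear independence of the $N$ functionals $\phi_j|_{V_m}$ (Vandermonde across distinct node values plus the Remark~\ref{rem:equal-nodes} convention at coinciding nodes), computes $\dim\bigl((\mathbb{M}_1\dotplus\dots\dotplus\mathbb{M}_n)\cap V_m\bigr)$ using the fact that the direct sum is height-graded (distinct residues mod $n$, Lemma~\ref{lem:height-properties}(\ref{lem:height-properties-a})), and compares. Both approaches use exactly the same prior machinery (Theorem~\ref{thm:estimation-formula} for the upper bound, Remark~\ref{rem:heights-cover-factor-space}, Lemma~\ref{lem:form-height-S_n-minus-set-M}); what your route buys is that it makes the coinciding-nodes case explicit (the paper's ``$Q(z_1)=\dots=Q(z_N)=0$ implies $\deg Q\ge N$'' tacitly needs a rank/multiplicity argument when nodes repeat, which is not spelled out), and it yields as a by-product the dimension formula for solutions of bounded height, which is of independent interest. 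The determinant argument, on the other hand, is shorter and avoids the explicit Vandermonde computation. One small point to tighten in your write-up: you should specify the threshold on $m$ (e.g.\ $m\ge nN-1$ suffices for the Vandermonde step, and $m=nK+n-1$ with $K\ge\max_j k_j$ for the direct-sum count), and note that both constraints are simultaneously satisfiable for $m$ large.
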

\begin{proof}
  Because of Theorem~\ref{thm:estimation-formula}, it suffices to show
  that
  \begin{equation}
    \label{eq:second-inequality-conjecture}
    \sum_{j=1}^nh(\boldsymbol{r}_j)\ge Nn+\frac{n(n-1)}{2}\,.
  \end{equation}
  Suppose that this is not true and define
  \begin{equation*}
    Q(z):=\det\left(\boldsymbol{r}_1(z)\dots\boldsymbol{r}_n(z)\right)\,,
  \end{equation*}
  where $\left(\boldsymbol{r}_1(z)\dots\boldsymbol{r}_n(z)\right)$ is
  the square matrix with columns given by the vectors
  $\boldsymbol{r}_1(z),\dots,\boldsymbol{r}_n(z)$. For the entries of
  the generators, we use the notation
  \begin{equation*}
    \boldsymbol{r}_j(z)=\left(R_1^{(j)},\dots,R_n^{(j)}\right)^t\quad\forall
    j\in\{1,\dots,n\}\,.
  \end{equation*}
  It follows from Definition~\ref{def:height-polyn-vect}, that for any
  $j$ in $\{1,\dots,n\}$, there is $l(j)\in\{1,\dots,n\} $ such that
  \begin{equation*}
    h(\boldsymbol{r}_j)=n\deg R_{l(j)}^{(j)}+l(j)-1\,.
  \end{equation*}
  Moreover, by Lemma~\ref{lem:form-height-S_n-minus-set-M}, when $j$
  runs through the set $\{1,\dots,n\}$, $l(j)$ also runs through
  $\{1,\dots,n\}$. Therefore,
$$
    \sum_{j=1}^nh(\boldsymbol{r}_j)=n\sum_{j=1}^n \deg R_{l(j)}^{(j)}
    +\sum_{j=1}^n(j-1)\,.
 $$
 Thus the negation of (\ref{eq:second-inequality-conjecture})  imply that
 \begin{equation}
   \label{eq:sum-degrees-conjecture}
   \sum_{j=1}^n \deg R_{l(j)}^{(j)}<N\,.
 \end{equation}
  On the other hand, since the interchanging of two columns of a matrix
  leads to multiplying the corresponding determinant by $-1$, it is
  clear that for calculating the degree of the polynomial $Q$, one
  could use any arrangement of the vectors $\boldsymbol{r}_j$,
  $j\in\{1,\dots,n\}$. Thus,
  \begin{equation*}
    \deg Q(z)= \deg\det
    \left(\boldsymbol{r}_{l^{-1}(1)}(z)
      \dots\boldsymbol{r}_{l^{-1}(n)}(z)\right)\,.
  \end{equation*}
  Note that in this arrangement of the columns the diagonal elements of
  the matrix are the polynomials that determine the height of the
  generators. Hence, it is straightforward to verify that $\deg Q$ is
  the sum of the degree of the diagonal elements of
  $\left(\boldsymbol{r}_{l^{-1}(1)}(z)
    \dots\boldsymbol{r}_{l^{-1}(n)}(z)\right)$, that is,
 \begin{equation}
   \label{eq:degree-of-Q}
   \deg Q(z)=\sum_{j=1}^n \deg R_{l(j)}^{(j)}\,,
 \end{equation}
and, by (\ref{eq:sum-degrees-conjecture}), this is $<N$.

Now, fix a node of interpolation $z_{l_0}$ and observe that, since
$\boldsymbol{r}_j$ is in $\mathbb{S}(n,N)$ for all
$j\in\{1,\dots,n\}$, one has
\begin{equation*}
  \sum_{k=1}^n\alpha_k(l_0)R_k^{(j)}(z_{l_0})=0,\qquad j\in\{1,\dots,n\}\,.
\end{equation*}
By construction, this system has a solution, that is, the determinant
of the system vanishes, so $Q(z_{l_0})=0$. Since the interpolation
node $z_{l_0}$ was arbitrary, one concludes that
\begin{equation*}
  Q(z_1)=\dots=Q(z_N)=0\,.
\end{equation*}
These equalities, together with (\ref{eq:sum-degrees-conjecture}) and
(\ref{eq:degree-of-Q}), imply that $Q(z)\equiv 0$ which contradicts Lemma 
\ref{lem:independence-of-generators}.
\end{proof}
\begin{theorem}
\label{thm:all-solution-comb-generators}
Let $n\ge 2$. Any element $\boldsymbol{p}$ of $\mathbb{S}(n,N)$, can be
written in the form
\begin{align*}
\boldsymbol{p}=\sum_{j=1}^{n}S_j\boldsymbol{r}_j\,,
\end{align*}
where $S_j$ is a scalar polynomial and $\boldsymbol{r}_j$ is the $j$-th
generator of the interpolation problem ($j\in\{1,\dots,n\}$).
\end{theorem}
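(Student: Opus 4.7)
The plan is to proceed by strong induction on the height $h(\boldsymbol{p})$. The base case $\boldsymbol{p}=0$ (height $-\infty$) is trivial: take all $S_j\equiv 0$.

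For the inductive step, fix $\boldsymbol{p}\in\mathbb{S}(n,N)$ with $h(\boldsymbol{p})=m\ge 0$, and assume the result for every element of $\mathbb{S}(n,N)$ of height strictly less than $m$. By Remark~\ref{rem:heights-cover-factor-space}, there is a unique $j\in\{1,\dots,n\}$ such that $h(\boldsymbol{r}_j)\equiv m\pmod{n}$. The first key step is to show that $m\ge h(\boldsymbol{r}_j)$. Suppose otherwise; then in particular $\boldsymbol{p}\notin\mathbb{M}_j$. I will argue that $\boldsymbol{p}$ cannot belong to $\mathbb{M}_1\dotplus\cdots\dotplus\mathbb{M}_{j-1}$ either. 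Indeed, any nonzero element of $\mathbb{M}_1\dotplus\cdots\dotplus\mathbb{M}_{j-1}$ decomposes uniquely as $\boldsymbol{p}_1+\dots+\boldsymbol{p}_{j-1}$ with $\boldsymbol{p}_k\in\mathbb{M}_k$; by (\ref{eq:M-height}) the nonzero summands have heights lying in pairwise distinct classes of $\integers/n\integers$, so iterating Lemma~\ref{lem:height-properties}(\ref{lem:height-properties-a}) the total height equals the maximum of those heights, which therefore lies in one of the classes $h(\boldsymbol{r}_1),\dots,h(\boldsymbol{r}_{j-1})$ and not in the class of $h(\boldsymbol{r}_j)$. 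Hence $\boldsymbol{p}\in\mathbb{S}(n,N)\setminus[\mathbb{M}_1\dotplus\cdots\dotplus\mathbb{M}_{j-1}]$, and by Definition~\ref{def:j-generator} this forces $h(\boldsymbol{p})\ge h(\boldsymbol{r}_j)$, contradicting the assumption. Thus $m\ge h(\boldsymbol{r}_j)$.

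Now write $m=h(\boldsymbol{r}_j)+nk$ with $k\in\nats\cup\{0\}$ and set $\boldsymbol{q}(z):=z^{k}\boldsymbol{r}_j(z)\in\mathbb{M}_j$. By (\ref{eq:height-Sp}), $h(\boldsymbol{q})=h(\boldsymbol{r}_j)+nk=m=h(\boldsymbol{p})$. Applying Lemma~\ref{lem:height-properties}(\ref{lem:height-properties-c}), there exists $c\in\complex$ such that
\begin{equation*}
  h(\boldsymbol{p}-c\boldsymbol{q})\le m-1.
\end{equation*}
Since $\mathbb{S}(n,N)$ is a module over scalar polynomials (hence a linear space), $\boldsymbol{p}-c\boldsymbol{q}\in\mathbb{S}(n,N)$. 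By the induction hypothesis, there are scalar polynomials $\widetilde{S}_1,\dots,\widetilde{S}_n$ with
\begin{equation*}
\boldsymbol{p}-c\boldsymbol{q}=\sum_{l=1}^{n}\widetilde{S}_l\boldsymbol{r}_l.
\end{equation*}
Setting $S_l:=\widetilde{S}_l$ for $l\ne j$ and $S_j:=\widetilde{S}_j+cz^{k}$ yields the desired representation of $\boldsymbol{p}$.

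The main obstacle is the first key step, namely the lower bound $m\ge h(\boldsymbol{r}_j)$; once this is established, the rest is a clean descent in height using Lemma~\ref{lem:height-properties}. This lower bound relies crucially on the fact that the $n$ modules $\mathbb{M}_1,\dots,\mathbb{M}_n$ live in pairwise distinct classes of $\integers/n\integers$ (Remark~\ref{rem:heights-cover-factor-space} together with (\ref{eq:M-height})), so that the directness of the sum $\mathbb{M}_1\dotplus\cdots\dotplus\mathbb{M}_n$ and the additivity of heights through Lemma~\ref{lem:height-properties}(\ref{lem:height-properties-a}) cooperate to preclude any hidden drop in height. Uniqueness of the $S_j$, though not requested by the statement, also follows from this same class-disjointness argument.
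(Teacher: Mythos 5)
Your proof is correct, and it takes a genuinely different route from the paper's. The paper constructs an explicit basis $\{\boldsymbol{g}_k\}$ of $\mathbb{P}$ (via Theorem~\ref{thm:basis-height}) by splicing the standard vectors $\boldsymbol{e}_k$ together with the dilates $z^l\boldsymbol{r}_j$, then expands $\boldsymbol{p}$ in that basis and shows that the coefficients of the $\boldsymbol{e}_k$-terms must all vanish by invoking Lemma~\ref{lem:form-height-S_n-minus-set-M} on the residual. You instead run a direct strong induction on $h(\boldsymbol{p})$: identify the unique $j$ whose residue class matches $h(\boldsymbol{p})\bmod n$ (Remark~\ref{rem:heights-cover-factor-space}), use the class-disjointness of the $\mathbb{M}_k$ together with Definition~\ref{def:j-generator} to force $h(\boldsymbol{p})\ge h(\boldsymbol{r}_j)$, and then peel off $c\,z^k\boldsymbol{r}_j$ via Lemma~\ref{lem:height-properties}(\ref{lem:height-properties-c}) to drop the height by at least one. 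Both arguments pivot on the same two structural facts — the generators' heights exhaust $\integers/n\integers$, and Lemma~\ref{lem:form-height-S_n-minus-set-M} — but your descent argument is more elementary and avoids the bookkeeping of the index sets $\mathcal{A}_j,\mathcal{B}_j$ and the basis-of-$\mathbb{P}$ machinery, at the cost of not making the adapted basis of $\mathbb{S}(n,N)$ explicit (the paper's construction does, which is what makes the uniqueness and the direct-sum structure visibly manifest). One cosmetic point: Lemma~\ref{lem:height-properties}(\ref{lem:height-properties-c}) is stated as producing $c$ with $h(\boldsymbol{p}+c\boldsymbol{q})\le m-1$, so you should either write $\boldsymbol{p}+c\boldsymbol{q}$ or note that you replace $c$ by $-c$; this does not affect the argument.
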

\begin{proof}
For $j\in\{2,\dots,n\}$, consider the sets
\begin{equation}
  \label{eq:sets}
  \begin{split}
  \mathcal{B}_j:=&\left\{m\in\mathbb{N}:
    \: m=h(\boldsymbol{r}_{k})+nl+1, \text{ where } k<j\text{ and }
    l\in\mathbb{N}\cup\left\{0\right\}\right\}\,,\\
  \mathcal{A}_1:=&\left\{1,2,\dots,h(\boldsymbol{r}_1)\right\}\,,\\
  \mathcal{A}_j:=&\left\{h(\boldsymbol{r}_{j-1})+1,\dots,h(\boldsymbol{r}_j)\right\}\setminus\mathcal{B}_j\,.
\end{split}
\end{equation}
Now, define the  sequence
$\left\{\boldsymbol{g}_k\right\}_{k\in\mathbb{N}}$ as follows
\begin{equation*}
  \boldsymbol{g}_k(z):=
  \begin{cases}
    \boldsymbol{e}_k(z) & \text{for}\ k\in\cup_{j=1}^n\mathcal{A}_j\,,\\
    z^l\boldsymbol{r}_j(z) & \text{for}\ k=h(\boldsymbol{r}_{j})+nl+1\,,
  \end{cases}
\end{equation*}
where $\boldsymbol{e}_k$ is given in (\ref{eq:basis}). Note that
$h(\boldsymbol{g}_k)=k-1$. Therefore, by Theorem
\ref{thm:basis-height}, we have that
$\left\{\boldsymbol{g}_k\right\}_{k\in\mathbb{N}}$ is a basis in
$\mathbb{P}$. So, any $\boldsymbol{p}\in\mathbb{S}(n,N)$ can be
written as
\begin{align}
  \boldsymbol{p}&=\sum_{j\in\nats}c_j\boldsymbol{g}_j\nonumber\\
  &=\sum_{j=1}^n\sum_{k\in\mathcal{A}_j}c_{k}\boldsymbol{e}_{k}+\sum_{j=1}^{n}S_j\boldsymbol{r}_j\,,
\label{eq:=sum_j=0k_0-1c_j-k_j}
\end{align}
where $S_j$ is a scalar polynomial. Since $\boldsymbol{p}$ and
$\sum_{j=0}^{n-1}S_j\boldsymbol{r}_j$ are in $\mathbb{S}(n,N)$,
one has that
\begin{equation}
  \label{eq:found-solution}
  \sum_{j=1}^n\sum_{k\in\mathcal{A}_j}c_{k}\boldsymbol{e}_{k}
\end{equation}
is in $\mathbb{S}(n,N)$.

Let us show that (\ref{eq:found-solution}) is a trivial solution. 
Suppose on the contrary that (\ref{eq:found-solution}) is
nontrivial, i.\,e., there is $k\in\cup_{j=1}^n\mathcal{A}_j$
such that $c_k\ne 0$. Let
\begin{equation*}
  k_0:=\max_{1\le j\le n}\{k\in\mathcal{A}_j: c_k\ne0\}
\end{equation*}
and $\mathcal{A}_{j_0}$ be such that $k_0\in\mathcal{A}_{j_0}$.
By Lemma~\ref{lem:height-properties}(a) the height of
(\ref{eq:found-solution}) is equal to $k_0-1$. Therefore, by (\ref{eq:sets}),
\begin{equation*}
  h\left(\sum_{j=1}^n\sum_{k\in\mathcal{A}_j}c_{k}\boldsymbol{e}_{k}\right)
  <h(\boldsymbol{r}_{j_0})
\end{equation*}
and, by construction,
\begin{equation*}
  \sum_{j=1}^n\sum_{k\in\mathcal{A}_j}c_{k}\boldsymbol{e}_{k}\not\in
  \mathbb{M}_1\dotplus\dots\dotplus\mathbb{M}_{j_0-1}
\end{equation*}
which contradicts Lemma~\ref{lem:form-height-S_n-minus-set-M}.
\end{proof}
\def\cprime{$'$} \def\lfhook#1{\setbox0=\hbox{#1}{\ooalign{\hidewidth
  \lower1.5ex\hbox{'}\hidewidth\crcr\unhbox0}}} \def\cprime{$'$}
  \def\cprime{$'$} \def\cprime{$'$}

\end{document}